\def\ps@pprintTitle{%
	\let\@oddhead\@empty
	\let\@evenhead\@empty
	\def\@oddfoot{\footnotesize\itshape
		{} \hfill\today}%
	\let\@evenfoot\@oddfoot
}
\newtheorem{theor}{Theorem}
\newtheorem{prop}[theor]{Proposition}
\newtheorem{cor}[theor]{Corollary}
\newtheorem{lemma}[theor]{Lemma}
\newtheorem{question}[theor]{Question}
\theoremstyle{definition} 
\newtheorem{defin}{Definition}
\newtheorem{rem}{Remark}
\newtheorem*{conv}{Convention}
\newtheorem{ex}[theor]{Example}
\DeclareMathOperator{\Aut}{Aut}
\DeclareMathOperator{\SmallBrace}{SmallBrace}
\DeclareMathOperator{\SmallSkewbrace}{SmallSkewbrace}
\begin{document}

\begin{frontmatter}
	\title{A note on semiprime skew left braces and related semidirect products
 }
	\tnotetext[mytitlenote]{The author is a member of GNSAGA (INdAM). The author was partially supported by the MAD project Cod. ARS$01\_00717$}
	\author{Marco CASTELLI}
	\ead{marco.castelli@unisalento.it - marcolmc88@gmail.com}
 
	\address[unile]{Dipartimento di Matematica e Fisica ``Ennio De Giorgi"
		\\
		Universit\`{a} del Salento\\
		Via Provinciale Lecce-Arnesano \\
	73100 Lecce (Italy)\\}

\begin{abstract}
In this paper, we focus on semiprime skew left braces provided by semidirect products. We show that if a semidirect product $B_1\rtimes B_2$ is semiprime and $B_1$ is Artinian, then $B_1$ must be semiprime. Moreover, we prove that the semidirect product of strongly semiprime skew left braces is strongly semiprime. 
Finally, addressing Question $1$ in Smoktunowicz, \emph{More on skew braces and their ideals} (2024), we provide examples of skew left braces of abelian type that are non-simple and strongly prime.
\end{abstract}
\begin{keyword}
\texttt{Yang-Baxter equation\sep skew brace \sep left brace }
\MSC[2020] 16T25\sep 81R50  
\end{keyword}

\end{frontmatter}

\section*{Introduction}

Motivated by the search of set-theoretic solutions of the Yang-Baxter equation, which are of great impact in the study of the quantum Yang-Baxter equation (see \cite{baxter1972partition,yang1967} for more details), several algebraic structures were introduced. In particular, in \cite{guarnieri2017skew} Guarnieri and Vendramin introduced the notion of \emph{skew left brace}. A triple $(B,+,\circ)$ is said to be a skew left brace if $(B,+)$ and $(B,\circ)$ are groups, and the equality 
$$a\circ (b+c)=a\circ b-a+a\circ c$$
holds for all $a,b,c\in B$. This definition extends the one given by Rump in \cite{rump2007braces} where $(B,+)$ is an abelian group. Skew left braces are powerful tools in the study of the quantum Yang-Baxter equation, since they allow to provide set-theoretic solutions belonging to the class of non-degenerate ones. On the other hand, these algebraic structures have gained increased interest in recent years because of several links with various topics - such as graph theory \cite{properzi2023common}, Hopf-Galois structures \cite{CHILDS2018270}, triply factorized groups \cite{ballester2022}, and regular subgroups of the affine group \cite{cacs1} - that have been discovered.\\
It is well-known that groups and rings provide skew left braces. In fact, every group $(G,\cdotp)$ has two natural skew left brace structures given by $(G,\cdotp,\cdotp)$ and $(G,\cdotp,\cdotp^{op})$ (where $\cdotp^{op}$ is the opposite operation); while a radical ring $(A,+,\cdotp)$ is a skew left brace if we replace the operation $\cdotp$ with the adjoint operation $\circ$ given by $a\circ b:=a+a\cdotp b+b$ for all $a,b\in A$. For this reason, to study skew left braces, many papers have introduced several notions and tools inspired by group theory and ring theory,  such as sub-skew left braces, left ideals, ideals \cite{guarnieri2017skew}, matched product \cite{bachiller2015extensions}, and nilpotency \cite{CeSmVe19}. The concept of a solvable skew left brace was introduced in \cite{BCJO19} for skew left braces of abelian type, and extended in \cite{KoSmVe18} for arbitrary skew left braces. In analogy to what happens with groups, it is known that if $B$ is a skew left brace and $I$ is one of its ideals, then $B$ is solvable if and only if $B/I$ and $I$ are solvable (see \cite[Lemma $6.4$]{KoSmVe18}, \cite[Lemma 8.1.5]{kinnear2019}, and \cite[Proposition $2.4$]{BCJO19}). In \cite{KoSmVe18,smoktunowicz2024more}, the notions of (strongly) prime and (strongly) semiprime ideals were introduced, and suitable versions of celebrated theorems of ring theory were given for skew left braces (see \cite[Sections $5-6$]{KoSmVe18}).\\
To concretely provide new skew left braces, in \cite{Ru08,SmVe18} a suitable version of skew left braces semidirect product was developed. This construction has recently appeared in several contexts. Indeed, in \cite[Lemma 4.5]{dietzel2024indecomposable} it was shown that the permutation skew left braces of some set-theoretical solutions arise as semidirect products of two skew left braces, while Corollary $21$ of \cite{CaCaSt20x} shows that a particular class of set-theoretic solutions can be recovered by semidirect products of skew left braces. In the last year, this construction was studied in the context of digroups \cite{facchini2024}.
As a further step in the investigation of prime and semiprime skew left braces, in \cite[Example 5.3]{CJO20} Cedò, Jespers and Okni\'nski constructed the first example of a prime non-simple skew left brace of abelian type using semidirect products. In \cite{kinnear2021wreath}, Kinnear showed that the class of semiprime skew left brace is closed under semidirect products. The goal of this paper is to continue the investigation of semidirect products and their links with (strongly) prime and semiprime skew left braces.\\
In the first part of the paper, following \cite[Question 3.4]{kinnear2021wreath} and \cite[Question 8.2.11]{kinnear2019}, we discuss a possible converse to Kinnear's result on the semidirect product of semiprime skew left braces. In this context, we show that if a skew left braces semidirect product $B_1\rtimes B_2$ is semiprime, then so is $B_1$, provided that $B_1$ is Artinian \cite{jespers2021radical} (this is the case if $B_1$ is finite). On the other hand, we show that an analogous result for $B_2$ does not follow (even in the finite case), therefore we cannot provide a characterization similar to that of solvable skew left braces. Moreover, we show a “strongly" version of Kinnear's result, i.e. we prove that the semidirect product of strongly semiprime skew left braces is strongly semiprime. In Section $4$, we focus on strongly prime skew left braces. As noted in \cite{smoktunowicz2024more}, finding finite examples of non-simple strongly prime skew left braces of abelian type seems to be quite difficult (see \cite[Question $1$]{smoktunowicz2024more}). In this regard, we will note that the method provided in \cite[Proposition $5.1$]{CJO20} allows to construct strongly prime skew left braces of abelian type: in particular, in \cite[Example $5.3$]{CJO20} a concrete remarkable example was provided without computer calculations. However, this example has $92.160$ elements. As a personal contribution in this context, we provide a family of semidirect products of strongly prime non-simple skew left braces (not necessarily of abelian type) that includes the one constructed in \cite[Proposition $5.1$]{CJO20}. Moreover, we construct a smaller “computer-free" example with $576$ elements that answers in negative sense to \cite[Question $1$]{smoktunowicz2024more}. To this aim, the classification of skew left braces of abelian type with multiplicative group isomorphic to $\mathbb{S}_4$, given in \cite{rump2019construction}, is of crucial importance. Finally, we will show that the construction of further concrete examples by our results is possible, provided that we help us by the GAP package \cite{Ve24pack} to find skew left braces that satisfy suitable conditions.

\section{Basic definitions, notations and results}

In this section, we give basic definitions and results involving skew left braces. We start with the definition of a skew left brace.

\begin{defin}[Definition $1.1$, \cite{guarnieri2017skew}]
A triple $(B,+,\circ)$ with two binary operations is said to be a \emph{skew left brace} if $(B,+)$ and $(B,\circ)$ are groups and $a\circ (b+c)=a\circ b-a+a\circ c$ for all $a,b,c\in B$. $(B,+)$ will be called the \emph{additive group}, and $(B,\circ)$ will be called the \emph{multiplicative group}. Moreover, the inverse of an element $a\in B$ in the additive (resp. multiplicative) group will be indicated by $-a$ (resp. $a^-$).
\end{defin}

\noindent Skew left braces \emph{of abelian type}, i.e. with an abelian additive group, were previously introduced by Rump in \cite{rump2007braces}. 

\begin{ex}\label{primies}
\begin{itemize}
    \item[1)] Let $(B,+)$ be a group and $\circ$ be the binary operation on $B$ given by $a\circ b:= a+b$ for all $a,b\in B$. Then, the triple $(B,+,\circ)$ is a skew left brace. In the following, we will refer to this skew left brace as the \emph{trivial} skew left brace on $(B,+)$.
    \item[2)] Let $B:=(\mathbb{Z}/p^2\mathbb{Z},+)$ and $\circ$ be the binary operation on $B$ given by $a\circ b:=a+b+p\cdotp a\cdotp b$ (where $\cdotp$ is the ring-multiplication of $\mathbb{Z}/p^2\mathbb{Z}$). Then, $(B,+,\circ)$ is a skew left brace.
\end{itemize}
\end{ex}

\noindent Clearly, there are several skew left braces different from the ones provided in \cref{primies}. The skew left braces up to size $168$ (with some exceptions) were calculated in \cite{Ve24pack}. The database $\SmallBrace$ collects all the skew left braces of abelian type, while the database $\SmallSkewbrace$ collects all the skew left braces. Since throughout the paper we will ofter refer to these databases, we adopt the following convention.

\begin{conv}
    We will indicate by $\mathcal{B}_{n,k}$ the skew left brace of abelian type having size $n$ and located at position $k$ in the database $\SmallBrace$. Moreover, we will indicate by $\mathcal{SB}_{n,k}$ the skew left brace having size $n$ and located at position $k$ in the database $\SmallSkewbrace$.
\end{conv}

\noindent Given a skew left brace $B$ and $a\in B$, let us denote by $\lambda_a:B\longrightarrow B$ the map from $B$ into itself defined by $\lambda_a(b):= - a + a\circ b,$ for all $b\in B$. Then, $\lambda_a\in\Aut(B,+)$, for every $a\in B$; and the map $\lambda$ from $B$ to $Aut(B,+)$ given by $\lambda(a):=\lambda_a$ is an action of $(B,\circ)$ on $(B,+)$ by automorphisms (see \cite{guarnieri2017skew} for further details). As for other algebraic structures, one can define several sub-structures, such as left ideals and ideals.
\begin{defin}
Let $B$ be a skew left brace. A subset $I$ of $B$ is said to be a \textit{left ideal} if it is a subgroup of the additive group and $\lambda_a(I)\subseteq I$, for every $a\in B$. Moreover, an \emph{ideal} is a left ideal which is also a normal subgroup of the additive and multiplicative groups. 
\end{defin}

\noindent Of course, the subsets $\{0\}$ and $B$ are ideals, which we will call \emph{trivial.} If there are no other ideals, $B$ will be called \emph{simple}. In a standard manner, ideals give rise to \emph{quotient skew left braces}. Ideals of a quotient skew left brace $B/I$ correspond bijectively to ideals of $B$ that contain $I$. As one can expect, the intersection and the sum of a family (not necessarily finite) of ideals are again ideals. In particular, if $\mathcal{I}:=\{I_j\}_{j\in J}$ is a family of ideals, we denote the sum of the ideals in $\mathcal{I}$ by $\Sigma_{j\in J}I_j$ and by $I_1+...+I_n$ if $|J|<+\infty$. A map $\alpha$ between two skew left braces $B_1$ and $B_2$ is a \emph{homomorphism} if $\alpha(a+b)=\alpha(a)+\alpha(b)$ and $\alpha(a\circ b)=\alpha(a)\circ \alpha(b)$ for all $a,b\in B_1$, and its \emph{kernel}, defined as in the context of rings, is an ideal of $B_1$.\\
Examples of skew left braces can be constructed by a suitable version of the semidirect product, provided in \cite{SmVe18} (and in \cite{Ru08} for skew left braces of abelian type).
If $B_1$ and $B_2$ are skew left braces and $\alpha$ is a group homomorphism from $(B_2,\circ)$ to $Aut(B_1,+,\circ)$, the \emph{semidirect product of the skew left braces $B_1$ and $B_2$ via $\alpha$} is given by
$$(a_1,a_2)+(b_1,b_2):=(a_1+b_1,a_2+b_2) $$
$$(a_1,a_2)\circ(b_1,b_2):=(a_1\circ \alpha_{a_2}(b_1),a_2\circ b_2) $$
for all $(a_1,a_2),(b_1,b_2)\in B_1\times B_2 $. From now on, we will indicate this skew left brace by $B_1\rtimes_{\alpha}B_2$. In analogy to the groups semidirect product, the set $B_1\times \{0\}$ is an ideal of $B_1\rtimes_{\alpha} B_2$.

\smallskip

Before presenting the next result, whose proof is straightforward, we fix some notations. If $X,Y$ are nonempty sets, we denote by $\pi_1$ (resp. $\pi_2$) the map from $X\times Y$ to $X$ (resp. from $X\times Y$ to $Y$) given by $\pi_1(x,y):=x$ (resp. $\pi_2(x,y):=y$) for all $(x,y)\in X\times Y$. 

\begin{lemma}\label{projid}
    Let $B_1$ and $B_2$ be skew left braces, and $\alpha$ be a homomorphism from $B_2$ to $Aut(B_1,+,\circ)$. Let $B:=B_1\rtimes_{\alpha} B_2$. Then:
    \begin{itemize}
        \item[1)] if $I$ is an ideal of $B$, then $\pi_2(I)$ is an ideal of $B_2$
        \item[2)] if $J$ is an ideal of $B_2$ contained in $Ker(\alpha)$, then $\{0\}\times J$ is an ideal of $B$;
        \item[3)] if $J$ is an ideal of $B_1$, and $\alpha_b(J)=J$ for all $b\in B_2$, then $J\times \{0\}$ is an ideal of $B$.
    \end{itemize}
\end{lemma}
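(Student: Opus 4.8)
The plan is to exploit that the second projection $\pi_2 : B \to B_2$, $(a_1,a_2)\mapsto a_2$, is a surjective homomorphism of skew left braces, and that the $\lambda$-maps of $B$ split componentwise. First I would record, directly from the definitions, the two computational facts on which all three parts rest, namely
$$\lambda_{(a_1,a_2)}(b_1,b_2) = (\lambda_{a_1}(\alpha_{a_2}(b_1)),\ \lambda_{a_2}(b_2))$$
and the formula for the multiplicative inverse
$$(a_1,a_2)^- = (\alpha_{a_2^-}(a_1^-),\ a_2^-).$$
Each of the three conclusions then reduces to substituting these into the definition of an ideal and reading off the components.

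For part 1, since $\pi_2$ is a surjective skew brace homomorphism, I would show it carries ideals to ideals. Normality of $\pi_2(I)$ in $(B_2,+)$ and in $(B_2,\circ)$ is inherited from normality of $I$, because the image of a normal subgroup under a surjective group homomorphism is normal. The $\lambda$-invariance follows from the intertwining identity $\pi_2(\lambda_a(x)) = \lambda_{\pi_2(a)}(\pi_2(x))$ together with surjectivity, since every $b\in B_2$ lifts to some $a\in B$. No hypothesis beyond $I \trianglelefteq B$ is used here.

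For parts 2 and 3 I would instead verify the four defining conditions of an ideal directly on $\{0\}\times J$ and $J\times\{0\}$. Additive (normal) subgroup closure and $\lambda$-invariance are immediate from the componentwise $\lambda$-formula, using that $\alpha_{a_2}$ fixes $0$ and (in part 3) preserves $J$. The one substantive computation — and the step where the hypotheses are actually consumed — is multiplicative normality. In part 2, conjugating $(0,j)$ gives $(a_1,a_2)\circ(0,j)\circ(a_1,a_2)^- = (0,\ a_2\circ j\circ a_2^-)$, but the vanishing of the first coordinate requires collapsing $\alpha_{a_2\circ j} = \alpha_{a_2}\circ\alpha_j$ down to $\alpha_{a_2}$, which is exactly where $j\in J\subseteq Ker(\alpha)$ is needed. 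In part 3, the analogous conjugation produces first coordinate $a_1\circ\alpha_{a_2}(j)\circ a_1^-$, which lands in $J$ precisely because $\alpha_{a_2}(J)=J$ and $J\trianglelefteq(B_1,\circ)$.

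I expect the bookkeeping of the multiplicative inverse and the cancellation $\alpha_{a_2}\circ\alpha_{a_2^-}=\id$ to be the only place demanding care; everything else is formal. The main obstacle is conceptual rather than technical: recognizing which hypothesis ($J\subseteq Ker(\alpha)$ in part 2 versus $\alpha_b(J)=J$ in part 3) is responsible for which closure condition. Once the inverse and $\lambda$ formulas are in hand, all remaining verifications are routine, consistent with the author's remark that the proof is straightforward.
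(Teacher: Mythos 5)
Your proof is correct and is precisely the routine componentwise verification the paper leaves to the reader (it states the lemma's proof is straightforward and omits it). Both of your key formulas, $\lambda_{(a_1,a_2)}(b_1,b_2)=(\lambda_{a_1}(\alpha_{a_2}(b_1)),\lambda_{a_2}(b_2))$ and $(a_1,a_2)^-=(\alpha_{a_2^-}(a_1^-),a_2^-)$, are right, and you correctly locate where each hypothesis is consumed: surjectivity of $\pi_2$ in part 1, $J\subseteq Ker(\alpha)$ for the vanishing first coordinate under multiplicative conjugation in part 2, and $\alpha_b(J)=J$ together with normality of $J$ in $(B_1,\circ)$ in part 3.
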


In order to study skew left braces using tools that resemble the ones used in ring-theory, one has to consider a third operation $*$ for a skew left brace $B$ given by $a*b:=-a+a\circ b-b$, for all $a,b\in B$. From now on, if $B$ is a skew left brace and $X,Y\subseteq B$, we denote by $X*Y$ the additive subgroup of $B$ generated by the elements of the form $x*y$, with $x\in X$ and $y\in Y$.\\
By the operation $*$, in \cite[Definitions $5.1$ and $5.6$]{KoSmVe18} the concepts of prime and semiprime skew left braces were defined. In \cite[Definitions $3$ and $4$]{smoktunowicz2024more} and \cite[Definition $5.1$]{trappeniers2023two} versions of strong (semi)primality were given.

\begin{defin}
    Let $B$ be a skew left brace.
            \vspace{-2.5mm}
    \begin{itemize}
        \item $B$ is \emph{semiprime} if for each non-zero ideal $I$ one has $I*I\neq \{0\}$.
        \vspace{-2.5mm}
        \item $B$ is \emph{prime} if for any non-zero ideals $I,J$ one has $I*J\neq \{0\}$.
                \vspace{-2.5mm}
        \item $B$ is \emph{strongly semiprime} if for each non-zero ideal $I$ one has that every $*$-product of copies of $I$ is non-zero.
                \vspace{-2.5mm}
        \item $B$ is \emph{strongly prime} if every $*$-product of any number of non-zero ideals is non-zero. 
    \end{itemize} 
\end{defin}

In \cite{kinnear2021wreath} Kinnear showed that semiprime skew left braces are closed under semidirect products. 

\begin{theor}[Corollary 2.8 of \cite{kinnear2021wreath}]\label{teokin}
    Let $B_1,B_2$ be semiprime skew left braces. Then, a semidirect product $B:=B_1\rtimes B_2$ is a semiprime skew left brace.
\end{theor}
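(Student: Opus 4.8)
The plan is to fix an arbitrary non-zero ideal $I$ of $B:=B_1\rtimes_\alpha B_2$ and prove that $I*I\neq\{0\}$, distinguishing two cases according to whether the projection $\pi_2(I)$ is trivial. The computational backbone is the explicit form of the $*$-operation on the semidirect product: unwinding the definitions of $+$ and $\circ$ gives, for all $(a_1,a_2),(b_1,b_2)\in B$,
\begin{equation*}
(a_1,a_2)*(b_1,b_2)=\bigl(-a_1+a_1\circ\alpha_{a_2}(b_1)-b_1,\ a_2*b_2\bigr).
\end{equation*}
Two consequences will be used repeatedly: first, $\pi_2$ is a skew left brace homomorphism, so $\pi_2(x*y)=\pi_2(x)*\pi_2(y)$ and hence $\pi_2(I*I)=\pi_2(I)*\pi_2(I)$; second, taking $a_2=b_2=0$ and recalling $\alpha_0=\id$ yields $(b_1,0)*(d_1,0)=(b_1*d_1,0)$, where on the right $*$ is computed in $B_1$.

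If $\pi_2(I)\neq\{0\}$, then by \cref{projid} the set $\pi_2(I)$ is a non-zero ideal of $B_2$, so semiprimality of $B_2$ gives $\pi_2(I)*\pi_2(I)\neq\{0\}$; combined with $\pi_2(I*I)=\pi_2(I)*\pi_2(I)$ this forces $I*I\neq\{0\}$, as desired.

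If instead $\pi_2(I)=\{0\}$, then $I\subseteq B_1\times\{0\}$, so $I=\pi_1(I)\times\{0\}$ with $\pi_1(I)\neq\{0\}$. Here I would argue that $\pi_1(I)$ is a non-zero ideal of $B_1$: the sub-skew left brace $B_1\times\{0\}$ is carried isomorphically onto $B_1$ by $\pi_1$, and an ideal of $B$ that happens to lie inside a sub-skew left brace satisfies all the ideal axioms (additive and multiplicative normality, $\lambda$-invariance) relative to that substructure as well, since those conditions only get weaker upon restricting the set of elements that act. Semiprimality of $B_1$ then gives $\pi_1(I)*\pi_1(I)\neq\{0\}$, and the identity $(b_1,0)*(d_1,0)=(b_1*d_1,0)$ shows $I*I=\bigl(\pi_1(I)*\pi_1(I)\bigr)\times\{0\}\neq\{0\}$.

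The routine part is the verification of the displayed formula for $*$ and the observation that $\pi_2$ respects it. I expect the only genuinely delicate point to be the claim in the second case that an ideal of $B$ contained in the substructure $B_1\times\{0\}$ restricts to an ideal of $B_1$; making this precise requires checking that the $\lambda$-maps and conjugations of $B$, when specialized to the second coordinate $0$, reproduce those of $B_1$ under the identification $B_1\times\{0\}\cong B_1$, which is exactly where the homomorphism $\alpha$ and the normalities of $I$ in $B$ enter.
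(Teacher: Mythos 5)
Your proposal is correct and follows essentially the same route as the result's source: the paper imports \cref{teokin} from Kinnear and, in the remark after \cref{semipri}, identifies the proof mechanism as reduction along the ideal $B_1\times\{0\}$ with quotient $B_2$, and your case split on $\pi_2(I)$ is precisely the contrapositive of that quotient step, since your identity $\pi_2(I*I)=\pi_2(I)*\pi_2(I)$ is equivalent (for the split extension) to Kinnear's inclusion $(J+I)*(J+I)\subseteq J*J+I$ with $I=B_1\times\{0\}$. Your second case --- that an ideal of $B$ contained in $B_1\times\{0\}$ restricts, via $\alpha_0=\id$, to an ideal of $B_1$ --- is exactly the step where semiprimality of $B_1$ enters in the original argument as well, and you correctly flag and verify the only delicate point there.
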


\noindent In \cite{kinnear2021wreath} and \cite{kinnear2019} a possible converse of \cref{teokin} was raised.

\begin{question}[Question 3.4 of \cite{kinnear2021wreath} - Question $8.2.11$ of \cite{kinnear2019}]\label{ques}
    Let $B_1,B_2$ be skew left braces, and suppose that their semidirect product via $\alpha$ $B_1\rtimes_{\alpha}B_2$ is semiprime. Are $B_1$ and $B_2$ semiprime?
\end{question}

\noindent The focus of the next section will be the discussion of \cref{ques}.

\vspace{-3mm}

\section{Semiprime skew left braces in semidirect products}
In this section, we discuss a possible converse to \cref{teokin}. In the main result, we show that if $B_1\rtimes_{\alpha} B_2$ is a semiprime skew left brace, then so is $B_1$ whenever it is an Artinian skew left brace (in particular, this is the case if $B_1$ has finite order). Under this hypothesis, our result answers affirmatively to \cref{ques} on semiprimality of $B_1$. We finish the section showing by a suitable example that an analogous result for $B_2$ does not follow, even if we restrict to the finite case.
\smallskip

At first, we have to provide some technical results. Following the notation adopted in \cite[Chapter $1$]{robinson2012course}, if $B$ is a skew left brace, $j\in B$, and $X,Y\subseteq B$, we indicate by $X^j$ the set $\{j+x-j\mid x\in X\}$ and by $X^Y$ the set  $\{y+x-y\mid x\in X,y\in Y\}$.

\begin{lemma}\label{prodid}
    Let $B$ be a skew left brace, $n$ a natural number and $J,I_1,...,I_n$ distinct minimal ideals of $B$. Then $J*(I_1+...+I_n)=(I_1+...+I_n)*J=0$.
\end{lemma}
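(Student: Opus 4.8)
The plan is to reduce everything to the pairwise products $J*I_k$ and $I_k*J$, show that these already vanish, and then bootstrap to the full sum by induction on $n$. First I would record the elementary but crucial fact that, for any two ideals $I,K$ of a skew left brace, one has $I*K\subseteq I\cap K$. Indeed, writing $a*b=\lambda_a(b)-b$ and using that $K$ is a left ideal (so $\lambda_a(K)\subseteq K$) gives $B*K\subseteq K$; on the other hand, writing $i\circ b=b\circ(b^-\circ i\circ b)=b+\lambda_b(b^-\circ i\circ b)$ and using that $I$ is normal in both $(B,+)$ and $(B,\circ)$ as well as $\lambda$-invariant yields $I*B\subseteq I$. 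Since every generator $i*k$ of $I*K$ is simultaneously a generator of $I*B$ and of $B*K$, this gives $I*K\subseteq(I*B)\cap(B*K)\subseteq I\cap K$. Applying this and recalling that two distinct minimal ideals intersect trivially — because $J\cap I_k$ is an ideal contained in $J$, different from $J$ (as $J\neq I_k$ would otherwise force $J\subseteq I_k$, hence $J=I_k$), and therefore $\{0\}$ by minimality of $J$ — I obtain $J*I_k\subseteq J\cap I_k=\{0\}$ and $I_k*J\subseteq I_k\cap J=\{0\}$ for every $k$; that is, each generator $j*i_k$ and $i_k*j$ is already $0$.

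Next I would treat $J*(I_1+\dots+I_n)$. The point is that, although $*$ is far from additive, one has the exact expansion $a*(x+y)=(a*x)+(a*y)^x$ (writing $z^x:=x+z-x$ as in the paper's conjugation notation), which follows immediately from $a*(x+y)=\lambda_a(x)+\lambda_a(y)-y-x$. A typical generator of $J*(I_1+\dots+I_n)$ has the form $j*(s'+i_n)$ with $s'\in I_1+\dots+I_{n-1}$ and $i_n\in I_n$, and the expansion rewrites it as $(j*s')+(j*i_n)^{s'}$. By the inductive hypothesis $J*(I_1+\dots+I_{n-1})=\{0\}$, so $j*s'=0$, while $j*i_n\in J*I_n=\{0\}$ forces $(j*i_n)^{s'}=0$; hence the generator vanishes and $J*(I_1+\dots+I_n)=\{0\}$, the base case $n=1$ being exactly $J*I_1\subseteq J\cap I_1=\{0\}$.

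For the reversed product $(I_1+\dots+I_n)*J$ I would run the same induction with the dual expansion $(x+y)*a=\lambda_x\big(\lambda_x^{-1}(y)*a\big)+(x*a)$, obtained from $\lambda_{x+y}=\lambda_x\circ\lambda_{\lambda_x^{-1}(y)}$ (using $x+y=x\circ\lambda_x^{-1}(y)$). Writing a generator as $(s'+i_n)*j$ with $s'\in I_1+\dots+I_{n-1}$, the inductive hypothesis kills $s'*j$, while $\lambda_{s'}^{-1}(i_n)$ again lies in the left ideal $I_n$ by $\lambda$-invariance, so $\lambda_{s'}^{-1}(i_n)*j\in I_n*J=\{0\}$ and the $\lambda_{s'}$-image of $0$ is $0$. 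Thus $(I_1+\dots+I_n)*J=\{0\}$ as well.

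The main obstacle — and the reason the crude bound $I*K\subseteq I\cap K$ does not by itself finish the proof — is that a minimal ideal $J$ may well be contained in the sum $I_1+\dots+I_n$, so that $J\cap(I_1+\dots+I_n)$ is nonzero. The vanishing of the product therefore genuinely requires the two expansion identities above, together with the $\lambda$-invariance of left ideals to keep the twisted terms $\lambda_x^{-1}(y)$ inside the relevant $I_k$. Once these are in place, the induction is routine.
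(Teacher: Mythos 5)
Your proof is correct and takes essentially the same route as the paper's: induction on $n$ using the pairwise vanishing $J*I_k\subseteq J\cap I_k=\{0\}$ together with the two expansion identities $a*(x+y)=(a*x)+(a*y)^x$ and $(x+y)*a=\lambda_x\bigl(\lambda_x^{-1}(y)*a\bigr)+(x*a)$, which are precisely the inline computations in the paper's proof (you merely peel off the last summand where the paper peels off the first, and you additionally spell out the standard containment $I*K\subseteq I\cap K$, which the paper uses without proof).
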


\begin{proof}
    At first, we show the equality $J*(I_1+...+I_n)=0$ by induction on $n$. If $n=1$, then $J*I_1\subseteq I_1\cap J=\{0\}$. Now, if $n>1$, $j\in J$, and $i_1\in I_1,...,i_n\in I_n$ then
    \begin{eqnarray}
       & & j*(i_1+...+i_n)= \lambda_{j}(i_1+...+i_n)-(i_1+...+i_n) \nonumber \\
       &=& \lambda_{j}(i_1)-i_1+i_1+\lambda_{j}(i_2+...+i_n)-(i_2+...+i_n)-i_1 \nonumber     
    \end{eqnarray}
    the last member belongs to $J*I_1+(J*(I_2+...+I_n))^{i_1}$, which is $0$ by the inductive hypotesis.
    Using induction again on $n$, we show the equality $(I_1+...+I_n)*J$. As in the first part, $I_1*J \subseteq I_1\cap J=\{0\}$. Now, if $n>1$, $j\in J$, and $i_1\in I_1,...,i_n\in I_n$ then
    \begin{eqnarray}
       & & (i_1+...+i_n)*j =\lambda_{i_1+...+i_n}(j)-j \nonumber \\
       &=& \lambda_{i_1}(\lambda_{\lambda_{i_1^-}(i_2+...+i_n)}(j)-j)+\lambda_{i_1}(j)-j \nonumber 
    \end{eqnarray}
the last member belongs to $\lambda_{i_1}((I_2+...+I_n)*J)+I_1*J$, which is equal to $0$ by the inductive hypothesis. Therefore, the statement follows.
\end{proof}

\begin{lemma}\label{lemchiav}
    Let $B$ be a skew left brace, $n$ be a natural number with $n>1$ and $I_1,...,I_n$ be distinct minimal ideals of $B$. Then, we have  
    $$(i_1+...+i_n)*(j_1+...+j_n)\in  (I_1*I_1)+(I_2*I_2)^{I_1}+...+(I_n*I_n)^{I_1+...+I_{n-1}}$$
    for all $i_1,j_1\in I_1,...i_n,j_n\in I_n$.
\end{lemma}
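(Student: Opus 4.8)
The plan is to argue by induction on $n$, peeling off the first ideal $I_1$ at each step. Before starting I would record two elementary identities for the $*$-operation, both of which follow at once from $a*b=\lambda_a(b)-b$ and the fact that each $\lambda_a$ is an additive automorphism. The first is the right-hand rule $a*(b+c)=(a*b)+(a*c)^{b}$, obtained by expanding $\lambda_a(b+c)=\lambda_a(b)+\lambda_a(c)$ and reorganizing. The second is the left-hand rule $(a+b)*c=\lambda_a\big(\lambda_{a^-}(b)*c\big)+(a*c)$, which rests on the relation $\lambda_{a+b}=\lambda_a\,\lambda_{\lambda_{a^-}(b)}$; this in turn comes from $a+b=a\circ\lambda_{a^-}(b)$ (since $a\circ\lambda_{a^-}(b)=a+\lambda_{a\circ a^-}(b)=a+b$) together with the fact that $\lambda$ is a multiplicative action. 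I would also record the trivial but crucial rewriting $\lambda_a(b)=(a*b)+b$.

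With these in hand, write $S=i_1+R$ with $R=i_2+\cdots+i_n$ and $T=j_1+W$ with $W=j_2+\cdots+j_n$. Applying the right-hand rule splits $S*T$ as $(S*j_1)+(S*W)^{j_1}$. For the first summand I would apply the left-hand rule to get $S*j_1=\lambda_{i_1}\big(\lambda_{i_1^-}(R)*j_1\big)+(i_1*j_1)$; since $\lambda_{i_1^-}(R)\in I_2+\cdots+I_n$ and $j_1\in I_1$, \cref{prodid} forces $\lambda_{i_1^-}(R)*j_1=0$, leaving $S*j_1=i_1*j_1\in I_1*I_1$. For the second summand the left-hand rule gives $S*W=\lambda_{i_1}\big(\lambda_{i_1^-}(R)*W\big)+(i_1*W)$, and $i_1*W\in I_1*(I_2+\cdots+I_n)=0$ again by \cref{prodid}; writing $i_l':=\lambda_{i_1^-}(i_l)\in I_l$, the surviving factor $\lambda_{i_1^-}(R)*W=(i_2'+\cdots+i_n')*(j_2+\cdots+j_n)$ is exactly an instance of the statement for the $n-1$ ideals $I_2,\ldots,I_n$, so the inductive hypothesis places it in $M':=(I_2*I_2)+(I_3*I_3)^{I_2}+\cdots+(I_n*I_n)^{I_2+\cdots+I_{n-1}}$.

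The step I expect to be the genuine obstacle is disposing of the stray automorphism $\lambda_{i_1}$ sitting in front of this element of $M'$, since $\lambda_{i_1}$ is not additive conjugation whereas the target set is phrased purely in terms of conjugation. Here I would use the rewriting $\lambda_{i_1}(x)=(i_1*x)+x$ together with the vanishing of cross products: every summand of $M'$ lies in some $I_l$ with $l\geq 2$ (conjugation preserves $I_l$ by normality), so $i_1*x\in I_1*I_l=0$ and hence $\lambda_{i_1}$ acts as the identity on each such summand, giving $\lambda_{i_1}(\mu)=\mu$ for all $\mu\in M'$. It then remains to absorb the outer conjugation by $j_1\in I_1$: as conjugation is an additive automorphism it distributes over the sum defining $M'$, and conjugating a typical piece of $(I_l*I_l)^{I_2+\cdots+I_{l-1}}$ by $j_1$ merely enlarges the conjugating element from $I_2+\cdots+I_{l-1}$ to $I_1+I_2+\cdots+I_{l-1}$. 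Thus $(S*W)^{j_1}\in(I_2*I_2)^{I_1}+\cdots+(I_n*I_n)^{I_1+\cdots+I_{n-1}}$, which combined with $S*j_1\in I_1*I_1$ yields the claim; the base case $n=1$ is the tautology $i_1*j_1\in I_1*I_1$.
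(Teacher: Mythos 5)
Your proof is correct and takes essentially the same route as the paper's: induction on $n$ peeling off $I_1$, the factorization $\lambda_{a+b}=\lambda_a\lambda_{\lambda_{a^-}(b)}$ (via $a+b=a\circ\lambda_{a^-}(b)$), \cref{prodid} to annihilate all cross terms, and the same final bookkeeping $(x^y)^{j_1}=x^{j_1+y}$ to absorb the outer conjugation. The only local difference is in handling $S*W$: the paper refactors $i_1+R=R+(-R+i_1+R)$ so that no stray $\lambda_{i_1}$ ever appears, whereas you keep the factorization with $i_1$ first and then remove $\lambda_{i_1}$ by observing that it fixes $I_2+\cdots+I_n$ pointwise (itself a consequence of \cref{prodid}) — an equivalent and equally valid way around the same obstacle.
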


\begin{proof}
  We will prove the statement by induction on $n$. If $n=2$, let $i_1,j_1\in I_1$, and $i_2,j_2\in I_2.$ Then
    \begin{eqnarray}
      & &   (i_1+i_2)*(j_1+j_2)= \lambda_{(i_1+i_2)}(j_1+j_2)-(j_1+j_2) \nonumber \\
        &=& \lambda_{i_1}(\lambda_{\lambda_{i_1^-}(i_2)}(j_1)-j_1)+\lambda_{i_1}(j_1)+\lambda_{i_2+(-i_2+i_1+i_2)}(j_2)-j_2-j_1 \nonumber \\
        &=&  \lambda_{i_1}(\lambda_{\lambda_{i_1^-}(i_2)}(j_1)-j_1)+\lambda_{i_1}(j_1)+\lambda_{i_2}(\lambda_{\lambda_{i_2^-}(-i_2+i_1+i_2)}(j_2)-j_2)+\lambda_{i_2}(j_2)-j_2-j_1 \nonumber        
    \end{eqnarray}
and since $\lambda_{\lambda_{i_1^-}(i_2)}(j_1)-j_1\in I_2*I_1 $ and $\lambda_{\lambda_{i_2^-}(-i_2+i_1+i_2)}(j_2)-j_2\in I_1*I_2$, by \cref{prodid} we obtain that the last member is equal to $\lambda_{i_1}(j_1)-j_1+j_1+\lambda_{i_2}(j_2)-j_2-j_1 $, and hence it belongs to $(I_1*I_1)+(I_2*I_2)^{I_1}$.\\
Now, suppose $n>2$ and let $i_1,j_1\in I_1,...,i_n,j_n\in I_n$. Then, if we set $o_1:=(-(i_2+...+i_n)+i_1+(i_2+...+i_n))$, we obtain
\begin{eqnarray}
    & &(i_1+...+i_n)*(j_1+...+j_n)=\lambda_{i_1+...+i_n}(j_1+...+j_n)-(j_1+...+j_n) \nonumber \\
    &=& \lambda_{i_1}(\lambda_{\lambda_{i_1^-}(i_2+...+i_n)}(j_1)-j_1)+\lambda_{i_1}(j_1)+\lambda_{i_2+...+i_n}(\lambda_{\lambda_{(i_2+...+i_n)^-}(o_1)}(j_2+...+j_n)+ \nonumber \\
   & &  -(j_2+...+j_n))+\lambda_{i_2+...+i_n}(j_2+...+j_n)-(j_2+...+j_n)-j_1\nonumber   \\
   &=&  \lambda_{i_1}(\lambda_{\lambda_{i_1^-}(i_2+...+i_n)}(j_1)-j_1)+\lambda_{i_1}(j_1)-j_1+j_1+ \nonumber  \\
   & & +\lambda_{i_2+...+i_n}(\lambda_{\lambda_{(i_2+...+i_n)^-}(o_1)}(j_2+...+j_n)-(j_2+...+j_n))+\nonumber \\
   & & +\lambda_{i_2+...+i_n}(j_2+...+j_n)-(j_2+...+j_n)-j_1\nonumber
\end{eqnarray}
where the last member belongs to $\lambda_{i_1}((I_2+...+I_n)*I_1)+I_1*I_1+(\lambda_{i_2+...+i_n}(I_1*(I_2+...+I_n))+(I_2+...+I_n)*(I_2+...+I_n))^{j_1}$, which by \cref{prodid} and the inductive hypothesis belongs to $I_1*I_1+((I_2*I_2)+...+(I_n*I_n)^{I_2+...+I_{n-1}})^{j_1}$ which is included in $(I_1*I_1)+(I_2*I_2)^{I_1}+...+(I_n*I_n)^{I_1+...+I_{n-1}}$.
\end{proof}

\begin{lemma}\label{lempre}
    Let $(B,+,\circ)$ be a skew left brace, $I$ be a minimal ideal and $\alpha\in Aut(B,+,\circ)$. Then, $\alpha(I)$ is a minimal ideal of $B$, and furthermore $I*I=0$ if and only if $\alpha(I)*\alpha(I)=0$.
\end{lemma}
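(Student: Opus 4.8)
The plan is to reduce everything to two identities expressing that an automorphism of the skew left brace is compatible with the derived operations $\lambda$ and $*$. First I would record that, since $\alpha$ preserves both $+$ and $\circ$, for all $a,b\in B$ one has
\[
\alpha(\lambda_a(b))=\alpha(-a+a\circ b)=-\alpha(a)+\alpha(a)\circ\alpha(b)=\lambda_{\alpha(a)}(\alpha(b)),
\]
so that $\alpha\circ\lambda_a=\lambda_{\alpha(a)}\circ\alpha$; and likewise
\[
\alpha(a*b)=\alpha(-a+a\circ b-b)=-\alpha(a)+\alpha(a)\circ\alpha(b)-\alpha(b)=\alpha(a)*\alpha(b).
\]
These two equalities are the only real content; everything else is bookkeeping.

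Next I would verify that $\alpha(I)$ is an ideal. As $\alpha$ is an additive and multiplicative automorphism, $\alpha(I)$ is a subgroup of $(B,+)$, and because $I$ is normal in both $(B,+)$ and $(B,\circ)$ and $\alpha$ is surjective, $\alpha(I)$ is normal in both groups as well (writing an arbitrary element of $B$ as $\alpha(c)$ and using $\alpha(c)+\alpha(I)-\alpha(c)=\alpha(c+I-c)=\alpha(I)$, together with the analogous computation for $\circ$). For $\lambda$-invariance, given any $a\in B$ I would use surjectivity to write $a=\alpha(c)$; then the first identity gives $\lambda_a(\alpha(I))=\lambda_{\alpha(c)}(\alpha(I))=\alpha(\lambda_c(I))\subseteq\alpha(I)$, since $\lambda_c(I)\subseteq I$. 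Minimality then follows formally: $\alpha(I)\neq\{0\}$ because $\alpha$ is injective, and if $J$ is an ideal with $\{0\}\subsetneq J\subseteq\alpha(I)$, then applying the same argument to the automorphism $\alpha^{-1}$ shows that $\alpha^{-1}(J)$ is an ideal with $\{0\}\subsetneq\alpha^{-1}(J)\subseteq I$, whence $\alpha^{-1}(J)=I$ by minimality of $I$ and $J=\alpha(I)$.

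Finally I would deduce the equivalence $I*I=0\iff\alpha(I)*\alpha(I)=0$. Since $\alpha$ is an isomorphism of the additive group, it carries the additive subgroup generated by the elements $i*j$ (with $i,j\in I$) onto the additive subgroup generated by their images; by the second identity these images are exactly the elements $\alpha(i)*\alpha(j)$ with $\alpha(i),\alpha(j)\in\alpha(I)$, so $\alpha(I*I)=\alpha(I)*\alpha(I)$. As $\alpha$ is a bijection fixing $0$, the left-hand side vanishes precisely when $I*I=\{0\}$, giving the claim. I do not expect any genuine obstacle here: the statement is essentially a transport-of-structure argument, and the only point requiring care is the systematic use of surjectivity of $\alpha$ in the $\lambda$-invariance and normality steps, so that the ideal axioms for $\alpha(I)$ are checked for \emph{every} $a\in B$ rather than merely for $a$ in the image of some distinguished subset.
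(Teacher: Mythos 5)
Your proof is correct, and it is precisely the ``standard calculation'' that the paper's proof consists of (the paper states the result follows by a standard calculation and gives no details): the identities $\alpha\lambda_a=\lambda_{\alpha(a)}\alpha$ and $\alpha(a*b)=\alpha(a)*\alpha(b)$, transport of the ideal axioms via surjectivity of $\alpha$, minimality via $\alpha^{-1}$, and $\alpha(I*I)=\alpha(I)*\alpha(I)$ are exactly the expected steps. Nothing to flag; your writeup simply makes explicit what the paper leaves implicit.
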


\begin{proof}
    The statement follows by a standard calculation.
\end{proof}

\begin{lemma}\label{lemind2}
    Let $(B,+,\circ)$ be a skew left brace $I_1,...,I_n$ be minimal ideals of $B$ and $I:=I_1+...+I_n$. Then, $I_i*I_i=0$ for all $i\in \{1,...,n\}$ if and only if $I*I=0$.
\end{lemma}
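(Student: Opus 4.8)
The plan is to prove Lemma~\ref{lemind2} by combining the previous two lemmas, treating the two implications separately. The nontrivial direction is the forward one: assuming $I_i*I_i=0$ for every $i$, I want to conclude $I*I=0$. The reverse direction should be nearly immediate, so I would dispatch it first.

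For the reverse implication, suppose $I*I=0$. Since each $I_i$ is a subgroup of $(B,+)$ contained in $I$, and $*$ is bilinear with respect to the additive structure in the sense that $X*Y$ is generated by the elements $x*y$, we have $I_i*I_i\subseteq I*I=0$ for each $i$. Thus $I_i*I_i=0$, which is the desired conclusion.

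For the forward implication, the idea is to reduce to the case where the $I_i$ are \emph{distinct} minimal ideals, so that \cref{lemchiav} applies. If some of the $I_i$ coincide, I would first discard repetitions: after relabelling, write $I=I_{k_1}+\dots+I_{k_m}$ where $I_{k_1},\dots,I_{k_m}$ are the distinct ideals among the $I_i$, and note that this does not change the sum $I$ nor the hypothesis $I_{k_j}*I_{k_j}=0$. Having reduced to distinct minimal ideals, I take arbitrary elements of $I$, written as $i_1+\dots+i_n$ and $j_1+\dots+j_n$ with $i_\ell,j_\ell\in I_\ell$ (using that $I$ is the sum of the $I_\ell$ as an additive group, so every element of $I$ is such a sum). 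By \cref{lemchiav},
$$(i_1+\dots+i_n)*(j_1+\dots+j_n)\in (I_1*I_1)+(I_2*I_2)^{I_1}+\dots+(I_n*I_n)^{I_1+\dots+I_{n-1}}.$$
Each summand $(I_\ell*I_\ell)^{I_1+\dots+I_{\ell-1}}$ is obtained by conjugating $I_\ell*I_\ell$, and by hypothesis $I_\ell*I_\ell=0$, so every conjugate is $0$ as well. Hence each term on the right vanishes, giving $(i_1+\dots+i_n)*(j_1+\dots+j_n)=0$. Since such products generate $I*I$ additively, it follows that $I*I=0$.

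The main obstacle is making sure the decomposition of an arbitrary element of $I=I_1+\dots+I_n$ as a sum $i_1+\dots+i_n$ with $i_\ell\in I_\ell$ is legitimate and that \cref{lemchiav} covers the needed case. \cref{lemchiav} is stated for $n>1$ and for \emph{distinct} minimal ideals, so the boundary cases $n=1$ (where $I=I_1$ and the claim is trivial) and the reduction to distinct ideals must be handled explicitly before invoking it. Once those edge cases are dealt with, the argument is a direct application of \cref{lemchiav} together with the fact that conjugation preserves the vanishing of $*$-products, so I do not expect any deep difficulty beyond the bookkeeping.
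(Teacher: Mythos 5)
Your proof is correct and follows essentially the same route as the paper, which disposes of the easy direction by the inclusion $I_i*I_i\subseteq I*I$ and derives the converse directly from \cref{lemchiav}. The extra bookkeeping you supply --- reducing to distinct minimal ideals, handling $n=1$, and noting that conjugates of the zero subgroup vanish --- is exactly the detail the paper's one-line proof leaves implicit, so there is no divergence in substance.
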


\begin{proof}
   If $I*I=0$, then clearly $I_i*I_i=0$ for all $i\in \{1,...,n\}$. The converse follows by \cref{lemchiav}.
\end{proof}

Now we are able to provide the main result of the section. First, recall that if $B$ is a skew left brace, then it is said to be \emph{Artinian} if every descending chain of ideals of $B$ is eventually stationary (see \cite[Section $3$]{jespers2021radical}).

\begin{theor}\label{princi}
    Let $B_1,B_2$ be skew left braces and $B:=B_1\rtimes_{\alpha} B_2$ be the skew left braces semidirect product via $\alpha$, and suppose that $B_1$ is Artinian. If $B$ is semiprime, then so is $B_1$.
\end{theor}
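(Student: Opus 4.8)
The plan is to argue by contradiction. Suppose $B$ is semiprime but $B_1$ is not; then there is a nonzero ideal $K$ of $B_1$ with $K*K=\{0\}$. The first use of the Artinian hypothesis is to replace $K$ by a \emph{minimal} ideal: since the descending chain condition on ideals is equivalent to the minimal condition, the nonempty family of nonzero ideals of $B_1$ contained in $K$ has a minimal member $I$, which is then a minimal ideal of $B_1$, and $I*I\subseteq K*K=\{0\}$. Thus I would produce a minimal ideal $I$ of $B_1$ with $I*I=\{0\}$.

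The core of the argument is to promote $I$ to an $\alpha$-invariant ideal without destroying the vanishing of the $*$-self-product, so that \cref{projid}(3) applies. I set $\tilde I:=\sum_{c\in B_2}\alpha_c(I)$. Since each $\alpha_c$ is an automorphism of $(B_1,+,\circ)$ and $\alpha$ is a homomorphism into $\Aut(B_1,+,\circ)$, the set $\tilde I$ is an ideal of $B_1$ satisfying $\alpha_b(\tilde I)=\tilde I$ for every $b\in B_2$ (by reindexing $c\mapsto b\circ c$), and it is nonzero because $I\subseteq\tilde I$. The delicate point is to check that $\tilde I*\tilde I=\{0\}$. By \cref{lempre}, each summand $\alpha_c(I)$ is again a minimal ideal of $B_1$ and satisfies $\alpha_c(I)*\alpha_c(I)=\{0\}$.

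To conclude the vanishing I reduce to finite subsums: any two elements $x,y\in\tilde I$ lie in $I_F:=\sum_{c\in F}\alpha_c(I)$ for some finite $F\subseteq B_2$, and after discarding repetitions $I_F$ is a sum of finitely many distinct minimal ideals each with trivial $*$-self-product, so \cref{lemind2} gives $I_F*I_F=\{0\}$ and in particular $x*y=0$. As $\tilde I*\tilde I$ is generated by such products, $\tilde I*\tilde I=\{0\}$. Finally I transfer the counterexample to $B$: a direct computation using $\alpha_0=\id$ gives $(x,0)*(y,0)=(x*y,0)$ for $x,y\in B_1$, so the nonzero ideal $\tilde I\times\{0\}$ of $B$ (an ideal by \cref{projid}(3)) satisfies $(\tilde I\times\{0\})*(\tilde I\times\{0\})=(\tilde I*\tilde I)\times\{0\}=\{0\}$, contradicting the semiprimality of $B$.

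The main obstacle is the control of the ``cross terms'' $\alpha_b(I)*\alpha_c(I)$ with $b\neq c$: the identities $\alpha_c(I)*\alpha_c(I)=\{0\}$ are by themselves insufficient, since $\tilde I*\tilde I$ also involves products of distinct summands. This is precisely what the minimal-ideal machinery of \cref{prodid,lemchiav,lemind2} is designed to handle, and it is the reason minimality of $I$ — hence the Artinian hypothesis — is needed. The only additional care required is the reduction from the possibly infinite sum $\tilde I$ to finite subsums, which is legitimate because $*$-products are computed element by element.
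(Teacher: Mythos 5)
Your proof is correct and takes essentially the same route as the paper's: extract a minimal ideal $I\subseteq K$ using the Artinian hypothesis, pass to the $\alpha$-invariant sum $\sum_{c\in B_2}\alpha_c(I)$, show its $*$-square vanishes via \cref{lempre} and \cref{lemind2}, and contradict the semiprimality of $B$ through \cref{projid}(3). Your explicit reduction to finite subsums with repetitions discarded is exactly the step the paper phrases as ``up to enlarging $n$ and taking some of the $i_s$'s as zero,'' and your care about distinctness of the summands (needed for \cref{lemchiav}) is a welcome touch the paper leaves implicit.
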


\begin{proof}
    Suppose that $B_1$ is not semiprime, and let $K$ be a nonzero ideal of $B_1$ such that $K*K=0$. Since $B_1$ is Artinian, $K$ contains a minimal ideal, which we denote by $I$. Now, let $J:=\Sigma_{b\in B_2}\alpha_b(I)$. By a standard calculation, one can show that $J$ is an ideal of $B_1$ and is invariant under the action $\alpha$; thus by $3)$ of \cref{projid} $J\times \{0\}$ is an ideal of $B_1\rtimes_{\alpha} B_2$. We check that $J*J=0$: it suffices to check that $j*j'=0$ for all $j,j'\in J$. One has $j=i_1+...+i_n$ and $j'=i_1'+...+i_m'$ for suitable elements $n,m\in \mathbb{N}$, $b_1,...,b_n,b_1',...,b_m'\in B_2$ and $i_s\in \alpha_{b_s}(I),i_t'\in \alpha_{b_t'}(I)$ where $s\in \{1,...,n\},t\in \{1,...,m\}$. Up to enlarging $n$ and to taking some of the $i_s$'s and $i_s'$'s as zero, we may suppose $n=m$ and $b_s=b_s'$. Let $I_s:=\alpha_{b_s}(I)$ for all $s\in \{1,...,n\}$, then $I_s*I_s=0$ follows from \cref{lempre}, and $(I_1+....+I_n)*(I_1+....+I_n)=0$ follows from \cref{lemind2}. Therefore $j*j'=0$. This proves $J*J=0$, whence one also has
    $$(J\times \{0\})*(J\times \{0\})=(J*J)\times \{0\}=\{(0,0)\}$$
    but this contradicts the semiprimality of $B$. 
\end{proof}

\noindent Since finite skew left braces are Artinian, the following is immediate.

\begin{cor}
        Let $B_1,B_2$ be finite skew left braces and $B:=B_1\rtimes_{\alpha} B_2$ the skew left braces semidirect product via $\alpha$. If $B$ is semiprime, then so is $B_1$.
\end{cor}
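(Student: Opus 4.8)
The plan is to derive this corollary directly from \cref{princi}, for which the only thing I need to supply is the observation that a finite skew left brace is automatically Artinian. Since every ideal of $B_1$ is in particular a subgroup of the additive group $(B_1,+)$, and $(B_1,+)$ is a finite group, any descending chain of ideals $I_1 \supseteq I_2 \supseteq \cdots$ consists of subgroups of a finite set. A proper inclusion $I_{k+1} \subsetneq I_k$ forces $|I_{k+1}| < |I_k|$, so a strictly descending chain would produce an infinite strictly decreasing sequence of nonnegative integers, which is impossible. Hence every descending chain of ideals of $B_1$ is eventually stationary, and $B_1$ is Artinian by definition.

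With this in hand, the hypotheses of \cref{princi} are satisfied: $B_1$ and $B_2$ are skew left braces, $B = B_1 \rtimes_{\alpha} B_2$ is their semidirect product via $\alpha$, $B_1$ is Artinian, and $B$ is semiprime by assumption. Applying \cref{princi} immediately yields that $B_1$ is semiprime, which is the desired conclusion.

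There is essentially no obstacle here, since the corollary is a specialization of \cref{princi} to the finite case; the entire content lies in recognizing that finiteness implies the Artinian condition, and that verification is elementary as sketched above. I would present the argument in a single short paragraph, first recording that finite skew left braces are Artinian and then invoking \cref{princi}.
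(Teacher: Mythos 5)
Your proposal is correct and follows exactly the paper's route: the paper derives this corollary as an immediate consequence of \cref{princi}, remarking only that finite skew left braces are Artinian, which is precisely the observation you verify (via the strictly decreasing cardinalities of ideals in a descending chain). Your write-up simply makes that elementary verification explicit, which is fine.
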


In general, an analog of \cref{princi} for $B_2$ does not hold even if we only consider the finite case, as the following example shows.

\begin{ex}\label{contro1}
    Let $B_1$ be the skew left brace given by $B_1:=\mathcal{SB}_{12,22}$ and $B_2$ be the trivial skew left brace on the cyclic group of size $2$. By computer calculations, we obtain that $Aut(B_1,+,\circ)\cong (\mathbb{Z}/2\mathbb{Z},+)$ and the unique non-trivial semidirect product $B_1\rtimes B_2$ is semiprime.
\end{ex}

\section{Semidirect product of strongly semiprime skew left braces}

In this small section, we show the “strongly" version of \cref{teokin}. At first, we give a preliminary lemma. 

\begin{lemma}\label{lemid}
    Let $B$ be a skew left brace, $I$ be an ideal of $B$, and $X,Y$ be subgroups of $(B,+)$. Then, we have $(X+I)*(Y+I)\subseteq (X*Y)+I$.
\end{lemma}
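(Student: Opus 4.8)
The plan is to unwind the definition of the $*$-operation and reduce everything to the single-generator case. Recall $a * b = -a + a\circ b - b = \lambda_a(b) - b$. Since $X*Y$ is by definition the additive subgroup generated by the elements $x * y$ with $x \in X$ and $y \in Y$, and the claimed containment $(X+I)*(Y+I) \subseteq (X*Y) + I$ is a statement about additive subgroups (the right-hand side being an additive subgroup because $X*Y$ is one and $I$ is an ideal, hence an additive subgroup), it suffices to show that every generator $(x+u) * (y+v)$ with $x \in X$, $y \in Y$, and $u,v \in I$ lies in $(X*Y)+I$.

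First I would compute the generator directly. Writing $(x+u)*(y+v) = \lambda_{x+u}(y+v) - (y+v)$, I want to compare this with the "clean" element $x * y = \lambda_x(y) - y$, showing the difference lands in $I$. The key structural input is that $\lambda$ restricts to an action on the quotient $B/I$: since $I$ is an ideal, $\lambda_a(I) \subseteq I$ for all $a$, and moreover $a \equiv a' \pmod{I}$ (in the multiplicative sense, since $I$ is normal in $(B,\circ)$) forces $\lambda_a \equiv \lambda_{a'}$ on $B/I$. Concretely, passing to the quotient map $\rho \colon B \to B/I$, I would verify that $\rho\big((x+u)*(y+v)\big) = \rho(x) * \rho(y) = \rho(x*y)$, because $\rho(x+u) = \rho(x)$ and $\rho(y+v) = \rho(y)$ and $\rho$ is a homomorphism of skew left braces that commutes with $*$. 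This immediately gives $(x+u)*(y+v) - (x*y) \in \ker \rho = I$, that is, $(x+u)*(y+v) \in (x*y) + I \subseteq (X*Y) + I$.

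The cleanest route is therefore: observe that the quotient homomorphism $\rho\colon B \to B/I$ satisfies $\rho(a*b) = \rho(a)*\rho(b)$ (since any skew left brace homomorphism respects $+$, $\circ$, and hence $*$), deduce that $x + u$ and $y + v$ have the same images under $\rho$ as $x$ and $y$ respectively, and conclude that the two $*$-products agree modulo $I$. I expect the only mild subtlety, rather than a genuine obstacle, to be bookkeeping: confirming that $(X*Y)+I$ really is an additive subgroup so that checking membership on generators of $(X+I)*(Y+I)$ is legitimate, and making sure the reduction "generators suffice" is stated before the computation. The computation itself is a one-line application of the homomorphism property of $\rho$ and requires no expansion of the $\lambda$-terms.

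Alternatively, if one prefers an explicit (homomorphism-free) verification, I would expand $\lambda_{x+u}(y+v)$ using additivity of each $\lambda$-map and the relation $\lambda_a \lambda_b = \lambda_{a\circ b}$, then absorb every term involving $u$ or $v$ into $I$ using $\lambda_a(I) \subseteq I$ and the normality of $I$ in $(B,+)$; this recovers the same conclusion but with more calculation. I would favor the quotient argument for brevity.
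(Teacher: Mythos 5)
Your proof is correct and follows essentially the same route as the paper: reduce to the generators $(x+u)*(y+v)$ of $(X+I)*(Y+I)$, which is legitimate since $(X*Y)+I$ is an additive subgroup, and then show each generator is congruent to $x*y$ modulo $I$. Your quotient argument via $\rho\colon B\to B/I$ is just a clean justification of the key congruence $(x+i)*(y+j)-x*y\in I$, which the paper asserts directly from $I$ being an ideal.
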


\begin{proof}
Since $I$ is an ideal, we have that $(x+i)*(y+j)-x*y\in I$ for all $x\in X$, $y\in Y$, and $i,j\in I$; therefore $(x+i)*(y+j)\in  (X*Y)+I$, for all $x\in X$ and $y\in Y$. Moreover, since $(X*Y)+I$ is a subgroup of $(B,+)$ and since $(X+I)*(Y+I)$ is generated by the elements of the form $(x+i)*(y+j) $, the statement follows.
\end{proof}

\begin{theor}\label{semipri}
    Let $B$ be a skew left brace and $I$ an ideal of $B$. If $I$ and $B/I$ are strongly semiprime skew left braces, then so is $B$.
\end{theor}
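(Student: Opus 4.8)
The plan is to fix an arbitrary nonzero ideal $K$ of $B$ and to show that every $*$-product of copies of $K$ is nonzero, arguing according to whether or not $K$ is contained in $I$. The point of the case split is that the two hypotheses cover complementary situations: strong semiprimality of $I$ will dispose of the ideals lying inside $I$, while strong semiprimality of $B/I$ will dispose of those that survive in the quotient. The base case (the single copy $K$, which is nonzero since $K$ is) is immediate, so the content is in the products with at least two factors.

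First I would treat the case $K\subseteq I$. Here the claim is that $K$ is a nonzero ideal of the sub-skew-brace $I$: it is an additive subgroup of $I$, it is normal in $(I,+)$ and in $(I,\circ)$ because it is normal in the ambient groups, and $\lambda^I_a(K)=\lambda_a(K)\subseteq K$ for $a\in I$ since the $\lambda$-maps of $I$ are restrictions of those of $B$. The key observation is that, because $I$ is a sub-skew-brace, the operation $*$ restricts to $I$, and for subsets $X,Y\subseteq I$ the additive subgroup $X*Y$ generated inside $I$ coincides with the one generated inside $B$. Consequently any $*$-product of copies of $K$ computed in $B$ equals the same product computed in $I$, which is nonzero by strong semiprimality of $I$.

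Next I would treat the case $K\not\subseteq I$. Let $\pi\colon B\to B/I$ be the canonical projection and set $\bar K:=\pi(K)=(K+I)/I$, a nonzero ideal of $B/I$. Since $\pi$ is a skew left brace homomorphism, $\pi(x*y)=\pi(x)*\pi(y)$, and hence $\pi(X*Y)=\pi(X)*\pi(Y)$ for all subsets $X,Y$, the image of an additively generated subgroup being generated by the images; \cref{lemid} records the companion containment $(X+I)*(Y+I)\subseteq (X*Y)+I$ that makes this matching precise at the level of preimages. By induction on the bracketing structure of the product, $\pi$ carries any $*$-product of copies of $K$ onto the corresponding $*$-product of copies of $\bar K$. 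The latter is nonzero by strong semiprimality of $B/I$, so the original product cannot be contained in $I$ and in particular is nonzero.

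I expect the main obstacle to be organizational rather than conceptual: the definition of strong semiprimality quantifies over \emph{all} $*$-products of copies of an ideal, that is, over every number of factors and every bracketing, so both cases require a clean inductive formulation over the nesting of $*$, together with the two compatibility statements above (restriction to $I$ in the first case, compatibility with $\pi$ and \cref{lemid} in the second). Verifying that $K$ is genuinely an ideal of $I$, and that the generated subgroups agree across the inclusion $I\hookrightarrow B$ and across the projection $\pi$, is routine but must be stated carefully in order to license the induction.
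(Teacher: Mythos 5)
Your proof is correct and is essentially the paper's argument run in the other direction: the paper assumes some $*$-product of copies of an ideal $J$ vanishes, uses \cref{lemid} to push that product into $B/I$ and conclude $J\subseteq I$ by strong semiprimality of the quotient, and then kills $J$ by strong semiprimality of $I$ --- precisely your two cases, stated contrapositively. Your replacement of \cref{lemid} by the identity $\pi(X*Y)=\pi(X)*\pi(Y)$ for the projection $\pi\colon B\to B/I$ is an equivalent (if slightly cleaner) device for transferring $*$-products to the quotient, not a genuinely different route.
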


\begin{proof}
    Let $J$ be an ideal of $B$. If there is a $*$-product of copies of $J$ that is zero, by repeatedly using \cref{lemid}, we obtain that there exists a $*$-product of copies of the ideal $(J+I)/I$ of $B/I$ that is zero. Since $B/I$ is semiprime, it follows that $(J+I)/I=I/I$ and hence $J\subseteq I$. By semiprimality of $I$ the equality $J=0$ follows. Therefore, $B$ is strongly semiprime.
\end{proof}

\begin{rem}
    \cref{lemid} is a more general version of the idea used in \cite[Section $2$]{kinnear2021wreath} to show \cref{teokin}, where the inclusion $(J+I)*(J+I)\subseteq J*J+I$ for two arbitrary ideals $I,J$ was used. In this context, since we have to consider an arbitrary $*$-product and, in general, the set $X*Y$ is not an ideal even if $X$ and $Y$ are, we have to weaken the hypothesis.
\end{rem}

\noindent As a corollary of the previous theorem, the main result of the section follows.

\begin{cor}
   Let $B_1,B_2$ be strongly semiprime skew left braces and $B:=B_1\rtimes_{\alpha}B_2$ be a semidirect product of $B_1$ and $B_2$ via a homomorphism $\alpha.$ Then, $B$ is a strongly semiprime skew left brace.
\end{cor}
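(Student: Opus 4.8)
The plan is to derive this as a direct corollary of \cref{semipri}. Recall that in the skew left brace semidirect product $B:=B_1\rtimes_{\alpha}B_2$, the set $I:=B_1\times\{0\}$ is an ideal of $B$ (as noted in the definition of the semidirect product). The strategy is to identify $I$ with $B_1$ and the quotient $B/I$ with $B_2$ as skew left braces, and then invoke the transfer theorem for strong semiprimality through an ideal.

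First I would verify that $I=B_1\times\{0\}$ is isomorphic to $B_1$ as a skew left brace. This is immediate: the map $a_1\mapsto(a_1,0)$ respects both operations, since $(a_1,0)+(b_1,0)=(a_1+b_1,0)$ and $(a_1,0)\circ(b_1,0)=(a_1\circ\alpha_0(b_1),0)=(a_1\circ b_1,0)$, because $\alpha_0=\id$ as $\alpha$ is a homomorphism from $(B_2,\circ)$ into $\Aut(B_1,+,\circ)$. Hence $I$ is strongly semiprime by hypothesis on $B_1$.

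Next I would identify the quotient $B/I$ with $B_2$. The projection $\pi_2:B\to B_2$, $(a_1,a_2)\mapsto a_2$, is a skew left brace homomorphism: it is clearly additive, and for the multiplication $\pi_2((a_1,a_2)\circ(b_1,b_2))=\pi_2((a_1\circ\alpha_{a_2}(b_1),a_2\circ b_2))=a_2\circ b_2$. Its kernel is exactly $I=B_1\times\{0\}$, so by the isomorphism theorem $B/I\cong B_2$, which is strongly semiprime by hypothesis.

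Finally, with $I$ strongly semiprime and $B/I\cong B_2$ strongly semiprime, \cref{semipri} immediately yields that $B$ is strongly semiprime, completing the proof. The only points requiring genuine (though routine) checking are the two structural identifications above; the substantive work has already been carried out in \cref{lemid} and \cref{semipri}, so I do not expect any real obstacle here. The main subtlety to keep in mind is simply making sure $\alpha_0=\id$ is used correctly so that the embedding of $B_1$ is multiplicative, but this follows from $\alpha$ being a group homomorphism sending the identity $0$ of $(B_2,\circ)$ to the identity automorphism.
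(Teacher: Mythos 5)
Your proof is correct and follows exactly the paper's route: the paper likewise sets $I:=B_1\times\{0\}$, notes $B/I\cong B_2$, and invokes \cref{semipri}. The identifications you verify (the embedding of $B_1$ using $\alpha_0=\id$ and the projection $\pi_2$ with kernel $I$) are precisely the routine checks the paper leaves implicit.
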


\begin{proof}
   If we set $I:=B_1\times \{0\}$, it follows that $B/I\cong B_2$, hence the statement follows by \cref{semipri}.
\end{proof}

As in the semiprime case, if $B:=B_1\rtimes_{\alpha}B_2$ is a strongly semiprime skew left brace, $B_2$ in full generality is not a strongly semiprime skew left brace. Indeed, \cref{contro1} is a strongly semiprime skew left brace and $B_2*B_2=0$. However, the question remains open for $B_1$.

\begin{question}
    If $B_1$ and $B_2$ are skew left braces such that their semidirect product via $\alpha$ $B_1\rtimes_{\alpha} B_2$ is strongly semiprime, is $B_1$ strongly semiprime?
\end{question}

\section{Strongly prime skew left braces by semidirect products}

In this section, we study semidirect products of simple skew left braces. First, we show the possible ideals of these skew left braces. Later, we apply this result to provide sufficient conditions that allow to construct strongly prime non-simple skew left braces. All these examples answer in negative sense to \cite[Question 1]{smoktunowicz2024more}.

\smallskip 

From now on, a homomorphism $\alpha:G\longrightarrow H$ is said to be \emph{non-trivial} if the kernel of $\alpha$ is a proper subgroup of $G$.

\begin{prop}\label{propo1}
    Let $B_1,B_2$ be simple skew left braces and $\alpha:B_2\longrightarrow Aut(B_1,+,\circ)$ be a non-trivial homomorphism. Suppose that $\bar{J}$ is an ideal of the skew left braces semidirect product $B:=B_1\rtimes_{\alpha} B_2$. Then, $\bar{J}$ is either a trivial ideal, or $\bar{J}=B_1\times \{0\}$ or $\bar{J}$ is such that $\bar{J}\cong B_2$ (as skew left braces) and $B$ is isomorphic, as a  skew left brace, to the skew left braces direct product $(B_1\rtimes \{0\})\times \bar{J}$.
\end{prop}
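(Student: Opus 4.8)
The plan is to run a case analysis governed by the two projections together with the simplicity of $B_1$ and $B_2$. First I would record two facts about an arbitrary ideal $\bar J$ of $B$. On one hand, $\pi_2(\bar J)$ is an ideal of $B_2$ by part $1)$ of \cref{projid}, so by simplicity of $B_2$ either $\pi_2(\bar J)=\{0\}$ or $\pi_2(\bar J)=B_2$. On the other hand, $\bar J\cap (B_1\times\{0\})$ is an intersection of two ideals of $B$, hence itself an ideal of $B$ contained in the sub-skew-brace $B_1\times\{0\}\cong B_1$; as such it is in particular an ideal of $B_1\times\{0\}$, so simplicity of $B_1$ forces it to be either $\{0\}$ or $B_1\times\{0\}$.

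With these in hand the two easy branches fall out. If $\pi_2(\bar J)=\{0\}$ then $\bar J\subseteq B_1\times\{0\}$, so $\bar J=\bar J\cap(B_1\times\{0\})$ is either $\{0\}$ or $B_1\times\{0\}$, yielding the first two alternatives. If instead $\pi_2(\bar J)=B_2$ and $\bar J\cap(B_1\times\{0\})=B_1\times\{0\}$, that is $B_1\times\{0\}\subseteq\bar J$, then for any $(b_1,b_2)\in B$ the surjectivity of $\pi_2$ on $\bar J$ gives some $a_1\in B_1$ with $(a_1,b_2)\in\bar J$, and writing $(b_1,b_2)=(b_1-a_1,0)+(a_1,b_2)$ with $(b_1-a_1,0)\in B_1\times\{0\}\subseteq\bar J$ shows $\bar J=B$, the remaining trivial ideal.

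The substantial case, and the heart of the argument, is $\pi_2(\bar J)=B_2$ together with $\bar J\cap(B_1\times\{0\})=\{0\}$. Here $\pi_2$ is a skew left brace homomorphism $B\to B_2$ with kernel $B_1\times\{0\}$, so its restriction to $\bar J$ is an injective homomorphism onto $B_2$, whence $\bar J\cong B_2$. It then remains to upgrade the set-level splitting to an isomorphism of skew left braces $B\cong(B_1\rtimes\{0\})\times\bar J$. Put $I_1:=B_1\times\{0\}$ and $I_2:=\bar J$: surjectivity of $\pi_2$ on $\bar J$ gives $B=I_1+I_2$, while $I_1\cap I_2=\{0\}$. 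The key point is that since $I_1,I_2$ are ideals one has $I_1*I_2\subseteq I_1\cap I_2=\{0\}$ and $I_2*I_1\subseteq I_1\cap I_2=\{0\}$; indeed, pushing $i_1*i_2$ through the quotient homomorphisms $B\to B/I_1$ and $B\to B/I_2$ (using $0*x=0=x*0$) places it in both ideals, and likewise for $i_2*i_1$. Consequently $\lambda_{i_1}(i_2)=i_2$ and $\lambda_{i_2}(i_1)=i_1$, so that $i_1\circ i_2=i_1+i_2=i_2\circ i_1$ for all $i_1\in I_1$, $i_2\in I_2$. From this I would verify that the sum map $(i_1,i_2)\mapsto i_1+i_2$ from the skew left brace direct product $I_1\times I_2$ to $B$ is a homomorphism for both $+$ and $\circ$, and that it is bijective because $(B,+)$ is the internal direct product of the normal additive subgroups $I_1,I_2$.

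I expect the main obstacle to be exactly this last step: showing that $I_1$ and $I_2$ split $B$ as an internal direct product of skew left braces, not merely of additive groups. The crucial input is the vanishing of the cross $*$-products $I_1*I_2$ and $I_2*I_1$, which trivializes the mutual $\lambda$-actions and forces $\circ$ and $+$ to agree across the two factors; granting this, multiplicativity of the sum map reduces to commuting $I_1$- and $I_2$-elements past each other in $(B,\circ)$, and the additive part is routine. Non-triviality of $\alpha$ does not seem to be needed for the case analysis itself: it serves to make the third, direct-product alternative the genuinely interesting phenomenon rather than the obvious splitting obtained when $\alpha$ is trivial.
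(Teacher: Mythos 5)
Your proposal is correct and takes essentially the same approach as the paper: a case analysis on $\bar{J}\cap(B_1\times\{0\})$ together with the (co)image in $B_2$, simplicity forcing each piece to be trivial or full, and then the internal direct-product splitting from $\bar{J}+(B_1\times\{0\})=B$ and $\bar{J}\cap(B_1\times\{0\})=\{(0,0)\}$. The only difference is presentational: you spell out the final splitting step explicitly (via $I_1*I_2\subseteq I_1\cap I_2=\{0\}$, the trivialized $\lambda$-actions, and the commutation of the two factors), whereas the paper compresses this into ``the statement follows by the equality $\bar{J}\cap(B_1\rtimes\{0\})=\{(0,0)\}$'', and you obtain $\bar{J}\cong B_2$ from the injectivity of $\pi_2$ restricted to $\bar{J}$ rather than by the isomorphism theorems, which amounts to the same computation.
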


\begin{proof}
Clearly, $\bar{J}\cap (B_1\rtimes \{0\})$ is an ideal of $B_1\rtimes \{0\}$, therefore, since $B_1$ is simple, this ideal is either $\{(0,0)\}$ or $B_1\rtimes \{0\} $. If it is equal to $B_1\rtimes \{0\} $, by the correspondence theorem we have that $\bar{J}=B_1\rtimes \{0\}$ or $\bar{J}=B_1\rtimes B_2$. Now, suppose that $\bar{J}\cap (B_1\rtimes \{0\})=\{(0,0)\}$. Thus, either $\bar{J}=\{(0,0)\} $ or $\bar{J}$ is a proper ideal of $B$. In the latter case, we obtain that $(B_1\times \{0\})+\bar{J}/B_1\times \{0\}\cong \bar{J}/(B_1\times \{0\})\cap \bar{J}\cong \bar{J}$, and since $B_2\cong B/(B_1\times \{0\})$, we have that $B_2$ contains an ideal isomorphic to $\bar{J}$. Since $\bar{J}\neq \{(0,0)\}$, we obtain that $\bar{J}\cong B_2$. Finally, since $B_2$ is simple and the ideals containing $B_1\times \{0\}$ correspond to the ideals of $B/(B_1\times \{0\})$, the unique ideals of $B$ containing $B_1\times \{0\}$ are $B_1\times \{0\}$ and $B$, hence $\bar{J}+(B_1\rtimes \{0\})=\bar{J}\circ (B_1\rtimes \{0\})=B$, and the statement follows by the equality $\bar{J}\cap (B_1\rtimes \{0\})=\{(0,0)\}$.
\end{proof}

\noindent Now, we want to provide sufficient conditions to ensure that $B_1\rtimes_{\alpha} B_2$ has a unique non-trivial ideal. If $(G,\cdotp)$ is a group and $g\in G$, we denote by $i_g$ the inner automorphism of $(G,\cdotp)$ given by $i_g(h):=g^{-1}\cdotp h \cdotp g$ for all $h\in G$.

\begin{prop}\label{ipo1}
    Let $B_1,B_2$ be simple skew left braces and $\alpha:B_2\longrightarrow Aut(B_1,+,\circ)$ be a non-trivial homomorphism. Suppose that there is a non-trivial ideal $J$ of $B_1\rtimes_{\alpha} B_2$ different from $B_1\times \{0\}$. Then, for every $(a_1,a_2)\in J$ we have $\alpha_{a_2}=i_{a_1}$.
\end{prop}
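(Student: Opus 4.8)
The plan is to invoke \cref{propo1} to pin down the position of $J$ inside $B:=B_1\rtimes_{\alpha}B_2$, and then to read off the identity $\alpha_{a_2}=i_{a_1}$ from the fact that the two complementary ideals commute under $\circ$. Since $J$ is non-trivial and different from $B_1\times\{0\}$, \cref{propo1} places us in its third alternative: $J\cong B_2$ and $B$ is, as a skew left brace, the direct product $(B_1\rtimes\{0\})\times J$. In particular $J\cap(B_1\times\{0\})=\{(0,0)\}$.

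First I would record the multiplicative commutation of the two factors. Because $B_1\times\{0\}$ and $J$ are ideals with $(B_1\times\{0\})\cap J=\{(0,0)\}$, one has $(B_1\times\{0\})*J\subseteq(B_1\times\{0\})\cap J=\{(0,0)\}$ and likewise $J*(B_1\times\{0\})=\{(0,0)\}$, using the standard facts that $X*B\subseteq X$ and $B*X\subseteq X$ for any ideal $X$. Vanishing of these $*$-products means $x\circ y=x+y$ and $y\circ x=y+x$ for $x\in B_1\times\{0\}$ and $y\in J$; since $B_1\times\{0\}$ and $J$ are also complementary normal subgroups of the additive group, they commute additively, whence $x\circ y=y\circ x$. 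Equivalently, this commutation is exactly what the direct product decomposition furnished by \cref{propo1} asserts.

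Then I would simply compute. Fix $(a_1,a_2)\in J$ and let $(b_1,0)$ range over $B_1\times\{0\}$. Commutation gives $(b_1,0)\circ(a_1,a_2)=(a_1,a_2)\circ(b_1,0)$; evaluating both sides with the semidirect-product multiplication, and using that $0$ is the common identity so that $\alpha_0=\id$, yields $(b_1\circ a_1,\,a_2)$ on the left and $(a_1\circ\alpha_{a_2}(b_1),\,a_2)$ on the right. Comparing first coordinates gives $\alpha_{a_2}(b_1)=a_1^-\circ b_1\circ a_1=i_{a_1}(b_1)$, the inner automorphism taken in the multiplicative group $(B_1,\circ)$. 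As $b_1\in B_1$ was arbitrary, $\alpha_{a_2}=i_{a_1}$.

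The only non-formal point is the commutation step; everything after it is bookkeeping. I expect the main obstacle to be making that step airtight, namely confirming that complementary ideals of a skew left brace commute under $\circ$. This rests either on the inclusions $X*B\subseteq X$, $B*X\subseteq X$ for an ideal $X$ together with additive commutation of complementary normal additive subgroups, or, more economically, on reading the commutation directly off the direct product structure already guaranteed by \cref{propo1}.
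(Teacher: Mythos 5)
Your proposal is correct and follows essentially the same route as the paper: both invoke \cref{propo1} to obtain the direct-product decomposition, deduce $(a_1,a_2)\circ(b_1,0)=(b_1,0)\circ(a_1,a_2)$, and then compare first coordinates in the semidirect-product multiplication to read off $\alpha_{a_2}=i_{a_1}$. Your alternative justification of the commutation step via $(B_1\times\{0\})*J\subseteq(B_1\times\{0\})\cap J=\{(0,0)\}$ and additive commutation of complementary normal subgroups is a sound way of making explicit what the paper leaves implicit in the direct-product structure, but it does not change the argument.
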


\begin{proof}
    If $(a_1,a_2)\in J$, by \cref{propo1} we have $(a_1,a_2)\circ (b_1,0)=(b_1,0)\circ (a_1,a_2)$ for all $b_1\in B_1$, which implies the equality $(a_1\circ \alpha_{a_2}(b_1),a_2 )=(b_1\circ a_1, a_2)$ for all $b_1\in B_1$, therefore $\alpha_{a_2}=i_{a_1}$ and the statement follows.
\end{proof}

\noindent Now, we give two corollaries of \cref{ipo1} that are useful for our purposes.

\begin{cor}\label{corol1}
    Let $B_1,B_2$ be simple skew left braces and $\alpha:B_2\longrightarrow Aut(B_1,+,\circ)$ be a non-trivial homomorphism. Moreover, suppose that $\alpha$ is not injective and $(B_1,\circ)$ has trivial center. Then, the unique non-trivial ideal of the skew left braces semidirect product $B_1\rtimes_{\alpha} B_2$ is $B_1\times \{0\}$.
\end{cor}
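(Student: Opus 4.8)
The plan is to combine \cref{ipo1} with the two extra hypotheses---that $\alpha$ is non-injective and that $(B_1,\circ)$ has trivial center---to rule out the third possibility in the trichotomy of \cref{propo1}. By \cref{propo1}, any non-trivial ideal $\bar J$ of $B_1\rtimes_\alpha B_2$ is either $B_1\times\{0\}$ or satisfies $\bar J\cong B_2$ with $B\cong(B_1\rtimes\{0\})\times\bar J$. So it suffices to show that the second case cannot occur, i.e.\ that there is no non-trivial ideal $J$ different from $B_1\times\{0\}$.

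First I would argue by contradiction: suppose such a $J$ exists. By \cref{ipo1}, for every $(a_1,a_2)\in J$ we have $\alpha_{a_2}=i_{a_1}$. The key step is to exploit that $\pi_2(J)$ is an ideal of $B_2$ (by part $1$ of \cref{projid}). Since $B_2$ is simple, $\pi_2(J)$ is either $\{0\}$ or all of $B_2$. If $\pi_2(J)=\{0\}$ then $J\subseteq B_1\times\{0\}$, and since $J$ is non-trivial and $B_1$ is simple this would force $J=B_1\times\{0\}$, contrary to our assumption; so $\pi_2(J)=B_2$. This means that for \emph{every} $a_2\in B_2$ there is some $a_1\in B_1$ with $\alpha_{a_2}=i_{a_1}$, i.e.\ the image of $\alpha$ lands entirely in the inner automorphisms $\mathrm{Inn}(B_1,\circ)$ coming from this correspondence.

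The place where the two remaining hypotheses enter is the following. Because $(B_1,\circ)$ has trivial center, the assignment $a_1\mapsto i_{a_1}$ is injective, so for each $a_2$ the element $a_1$ with $\alpha_{a_2}=i_{a_1}$ is \emph{unique}; this lets me define a well-defined map $\beta\colon B_2\to B_1$ by $\beta(a_2)=a_1$, and one checks it is a group homomorphism $(B_2,\circ)\to(B_1,\circ)$ whose composition with $a_1\mapsto i_{a_1}$ recovers $\alpha$. Now I bring in non-injectivity of $\alpha$: its kernel $\ker\alpha$ is a non-trivial ideal of $B_2$, hence equals $B_2$ by simplicity, which forces $\alpha$ to be the trivial homomorphism---contradicting that $\alpha$ is non-trivial. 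I expect the main obstacle to be the careful bookkeeping needed to verify that $\beta$ is genuinely a homomorphism and that $\ker\alpha$ is a full ideal (not merely a subgroup) of $B_2$, so that simplicity of $B_2$ can be invoked; once $\ker\alpha=B_2$ is established, the contradiction with non-triviality of $\alpha$ is immediate, and the conclusion that $B_1\times\{0\}$ is the unique non-trivial ideal follows.
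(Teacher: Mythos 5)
Your setup is the same as the paper's (invoke \cref{ipo1}, use part $1$ of \cref{projid} plus simplicity of $B_2$ to get $\pi_2(J)=B_2$), but the final step contains a genuine error: you claim that $\ker\alpha$ is an ideal of the skew left brace $B_2$, hence equal to $B_2$ by simplicity. The kernel of $\alpha$ is only a normal subgroup of the multiplicative group $(B_2,\circ)$, and simplicity of $B_2$ \emph{as a skew left brace} does not make $(B_2,\circ)$ a simple group, let alone make every normal subgroup of $(B_2,\circ)$ an ideal. This is not a bookkeeping issue that careful verification would repair; it is false in exactly the situations the corollary is designed for. Indeed, in \cref{esem} the paper applies this corollary with $B_2$ a simple skew left brace of abelian type of order $24$ whose multiplicative group is $\mathbb{S}_4$, and with $\ker\alpha\cong\mathbb{A}_4$, which is non-trivial and proper. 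Your argument, as written, would show that any non-injective $\alpha$ out of a simple $B_2$ is trivial---i.e.\ that the hypotheses of the corollary (non-injective \emph{and} non-trivial) can never be satisfied simultaneously---so you have proved too much. Note also that your concluding contradiction never uses the ideal $J$ at all, which is a warning sign: the corollary is not vacuous, so non-injectivity alone cannot yield the contradiction; it must be combined with the existence of $J$.

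The paper's actual use of the two extra hypotheses is more local, and your $\beta$-construction is an unnecessary detour. Take a single element $a_2\in\ker(\alpha)\setminus\{0\}$ (this is where non-injectivity enters). Since $\pi_2(J)=B_2$, there is $a_1$ with $(a_1,a_2)\in J$, and \cref{ipo1} gives $i_{a_1}=\alpha_{a_2}=\id$; triviality of the center of $(B_1,\circ)$ then forces $a_1=0$, so $(0,a_2)\in J$. Now $J\cap(\{0\}\times B_2)$ is a non-trivial ideal of the sub-skew left brace $\{0\}\times B_2\cong B_2$, so by simplicity $\{0\}\times B_2\subseteq J$, and since $J\cap(B_1\times\{0\})=\{(0,0)\}$ by \cref{propo1}, in fact $J=\{0\}\times B_2$. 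Applying \cref{ipo1} to all elements $(0,b)\in J$ yields $\alpha_b=i_0=\id$ for every $b\in B_2$, contradicting the non-triviality (not the non-injectivity) of $\alpha$. So the repair is: replace your global claim about $\ker\alpha$ by this pointwise argument, which extracts the single element $(0,a_2)\in J$ and lets simplicity act on the ideal $J\cap(\{0\}\times B_2)$ of $\{0\}\times B_2$, an object to which simplicity genuinely applies.
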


\begin{proof}
Suppose that $J$ is a non-trivial ideal of $B$ different from $B_1\times \{0\}$. By \cref{projid} and \cref{propo1}, we have that $\pi_2(J)=B_2$, therefore for every $a_2\in B_2$ there exists $a_1\in B_1$ such that $(a_1,a_2)\in J$. Then, by \cref{ipo1}, for every $(a_1,a_2)\in J$ we have $\alpha_{a_2}=i_{a_{1}}$. If we take $a_2\in Ker(\alpha)\setminus \{0\}$, being the center of $(B_1,\circ)$ trivial, it follows that $(0,a_2)\in J$. Thus, we have that $J\cap (\{0\}\times B_2)$ is a non-trivial ideal of $\{0\}\times B_2$, hence $J=\{0\}\times B_2$, thus $\alpha_{a_2}=i_0=id_{B_2}$ for all $a_2\in B_2$; but this contradicts that $\alpha$ is a non-trivial homomorphism.
\end{proof}

\begin{cor}\label{coro2}
    Let $B_1,B_2$ be simple skew left braces and $\alpha:B_2\longrightarrow Aut(B_1,+,\circ)$ be a non-trivial homomorphism. Moreover, suppose that $\alpha_a$ is not an inner automorphism of $(B_1,\circ)$, for every $a\in B_2\setminus \{0\}$. Then, the unique non-trivial ideal of the skew left braces semidirect product $B_1\rtimes_{\alpha} B_2$ is $B_1\times \{0\}$.
\end{cor}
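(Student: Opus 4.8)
The plan is to reduce Corollary~\ref{coro2} to the previously established structural results, exactly as in the proof of Corollary~\ref{corol1}. Suppose for contradiction that $J$ is a non-trivial ideal of $B:=B_1\rtimes_{\alpha}B_2$ different from $B_1\times\{0\}$. First I would invoke \cref{ipo1}, which tells us that for every element $(a_1,a_2)\in J$ the associated automorphism satisfies $\alpha_{a_2}=i_{a_1}$, the inner automorphism of $(B_1,\circ)$ determined by $a_1$. This is the key bridge: the existence of such an ideal forces each $\alpha_{a_2}$ (for $a_2$ appearing in a pair of $J$) to be inner.

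Next I would argue that $J$ actually witnesses a non-zero $a_2$. By \cref{projid}, $\pi_2(J)$ is an ideal of the simple skew left brace $B_2$, so it is either $\{0\}$ or all of $B_2$; and by \cref{propo1} the case $\pi_2(J)=\{0\}$ together with $J\neq\{(0,0)\}$ would put a non-zero element of $J$ inside $B_1\times\{0\}$, contradicting $\bar J\cap(B_1\rtimes\{0\})=\{(0,0)\}$ from the proposition (since $J\neq B_1\times\{0\}$). Hence $\pi_2(J)=B_2$, and in particular there exists $(a_1,a_2)\in J$ with $a_2\neq 0$. For that $a_2$, \cref{ipo1} gives $\alpha_{a_2}=i_{a_1}$, so $\alpha_{a_2}$ is an inner automorphism of $(B_1,\circ)$ with $a_2\in B_2\setminus\{0\}$. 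This directly contradicts the standing hypothesis that $\alpha_a$ is non-inner for every $a\in B_2\setminus\{0\}$, and the corollary follows.

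The main obstacle I anticipate is purely bookkeeping rather than conceptual: making sure the dichotomy on $\pi_2(J)$ is set up correctly so that the non-trivial, non-$(B_1\times\{0\})$ hypothesis on $J$ genuinely produces a pair with $a_2\neq 0$. One must be careful that ``non-trivial'' excludes $\{(0,0)\}$ and $B$ itself, and that \cref{propo1} is used to rule out the degenerate possibility where $J$ meets $B_1\times\{0\}$ non-trivially. Once that is pinned down, the final contradiction is immediate because the hypothesis of \cref{coro2} is strictly stronger than what is needed: it forbids \emph{any} non-zero $a_2$ from giving an inner $\alpha_{a_2}$, whereas \cref{corol1} only exploited the kernel. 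In fact this corollary can be seen as a streamlined variant of \cref{corol1}, bypassing the center-of-$(B_1,\circ)$ argument entirely, so I would expect a very short proof, essentially the two sentences: by \cref{propo1} and \cref{ipo1} any such $J$ yields $(a_1,a_2)\in J$ with $a_2\neq 0$ and $\alpha_{a_2}=i_{a_1}$ inner, contradicting the hypothesis.
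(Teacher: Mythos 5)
Your proof is correct and takes essentially the same approach as the paper: both arguments hinge on \cref{ipo1} combined with the standing non-inner hypothesis. The paper is slightly more direct — it observes that every element $(a_1,a_2)\in J$ must then have $a_2=0$, so $J\subseteq B_1\times\{0\}$ and simplicity of $B_1$ forces $J=B_1\times\{0\}$, a contradiction — so your detour through $\pi_2(J)=B_2$ via \cref{projid} and the intersection fact from \cref{propo1} is sound but unnecessary.
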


\begin{proof}
If $J$ is a non-trivial ideal of $B_1\rtimes_{\alpha} B_2$ different from $B_1\times \{0\}$, then by \cref{ipo1} every non-zero element of $J$ has the form $(a_1,a_2)\in B_1\times B_2$ with $\alpha_{a_2}=i_{a_1}$. Since $ \alpha_{a_2}$ is not an inner automorphism for $a_2\neq 0$, we must have $a_2=0$ and hence $J\subseteq B_1\times \{0\}$. Therefore, $J= B_1\times \{0\} $ because $B_1$ is simple.
\end{proof}

\noindent We remark that \cref{coro2} uses the main idea of \cite[Proposition 5.1]{CJO20}, where skew left braces with a unique non-trivial ideal were constructed by an outer automorphism of the multiplicative group of $B_1$.\\
Finally, we can construct the desired family of skew left braces. First, we give a simple but fundamental lemma.

\begin{lemma}\label{preparatory}
    Let $B$ be a skew left brace, and suppose that $B$ has a unique non-trivial ideal $J$, that is simple and non-trivial as a skew left brace. Then, $B$ is strongly prime.
\end{lemma}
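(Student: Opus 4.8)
The claim is that if a skew left brace $B$ has exactly one non-trivial ideal $J$, and $J$ is simple and non-zero as a skew left brace, then $B$ is strongly prime; that is, every $*$-product of any number of non-zero ideals is non-zero. The plan is to first catalogue the non-zero ideals of $B$: by hypothesis these are precisely $J$ and $B$ itself. Hence any finite sequence of non-zero ideals in an iterated $*$-product is a sequence whose entries each lie in $\{J, B\}$. The whole problem therefore reduces to showing that no bracketing of a finite string of symbols drawn from $\{J,B\}$, combined by the operation $*$, can collapse to $\{0\}$.

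The first key step is to control how $*$ interacts with these two ideals. I would record the containments $J*J \subseteq J$, $J*B\subseteq J$, $B*J\subseteq J$, and $B*B\subseteq B$, which follow because $J$ is an ideal (so $X*J$ and $J*X$ land in $J$) and because $B*B\subseteq B$ trivially. Thus every $*$-product of copies of $J$ and $B$ is contained in either $J$ or $B$, and more usefully, any such product that involves at least one factor equal to $J$ is contained in $J$. The second key step is to translate ``$J$ is simple and non-trivial'' into the statement that $J$, \emph{viewed as a skew left brace in its own right}, is prime: a simple skew left brace has no non-zero proper ideals, so its only non-zero ideal is $J$ itself, and since $J$ is non-trivial one checks $J*J\neq\{0\}$ (otherwise $J$ would have $J*J=\{0\}$, but a simple brace with $J*J=0$ would be the trivial brace on a simple additive group — one must verify this cannot vanish, or more directly invoke that a simple non-trivial brace is prime, hence strongly prime, by the relevant result of Smoktunowicz/Kinnear). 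In particular every $*$-product of copies of $J$ is non-zero.

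The third and decisive step is the induction on the length of the $*$-product. I would argue that any non-zero ideal appearing as a factor is $J$ or $B$, and that evaluating the outermost $*$ produces a product of strictly smaller total length whose value sits inside $J$ or inside $B$. If every factor is $B$, the product contains $B*B*\dots$ which I would show is non-zero (if $B$ is itself simple then $B=J$ and we are in the previous case; if $B$ is not simple it has the proper ideal $J\neq\{0\}$, and I would show the $B$-only products cannot vanish by comparing with the $J$-products using the ideal containments). The cleanest route is: since every non-zero ideal contains $J$ (as $J$ is the unique minimal non-trivial ideal), every $*$-product of non-zero ideals, after repeatedly applying the inclusions above, is sandwiched so as to contain a corresponding $*$-product of copies of $J$, which is non-zero by the primeness of the simple brace $J$.

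\textbf{Main obstacle.} The delicate point is the third step: passing from ``products of copies of $J$ are non-zero'' to ``products of arbitrary non-zero ideals ($J$ or $B$) are non-zero'', because $*$ is neither associative nor does it distribute over the ideal structure in a way that makes bracketings interchangeable, and the set $X*Y$ need not be an ideal even when $X,Y$ are. I expect to need a careful lower bound showing that each $*$-product of $J$'s and $B$'s \emph{contains} the $*$-product obtained by replacing every $B$ by $J$, so that non-vanishing of the all-$J$ product forces non-vanishing of the mixed product; establishing this containment for every bracketing, rather than just the left-normed one, is where the real work lies.
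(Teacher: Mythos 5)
Your plan is correct and follows the paper's proof essentially verbatim: since every non-zero ideal of $B$ contains the unique non-trivial ideal $J$, and $J*J$ is an ideal of the simple non-trivial brace $J$ (hence $J*J=J$, as $J*J=0$ would make $J$ trivial), every $*$-product of non-zero ideals contains $J\neq 0$. The ``main obstacle'' you flag is not actually one: from the generating sets, $X'\subseteq X$ and $Y'\subseteq Y$ immediately give $X'*Y'\subseteq X*Y$, so a structural induction over the bracketing shows every mixed product of $J$'s and $B$'s contains the corresponding all-$J$ product, which equals $J$ exactly.
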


\begin{proof}
     Every product $I_1*I_2$ of non-zero ideals of $B$ contains $J*J$, which is an ideal of $J$ (see \cite[Section 4]{KoSmVe18}) and hence equal to $J$ by hypothesis. It follows that an arbitrary product of non-zero ideals in $B$ contains $J$. Thus, the statement follows.
\end{proof}

\begin{theor}\label{final}
    Let $B_1,B_2$ and $\alpha$ be as in the statement of \cref{corol1} (resp. \cref{coro2}). Moreover, suppose that $B_1$ is not a trivial skew left brace. Then, the skew left braces semidirect product $B:=B_1\rtimes_{\alpha} B_2$ is strongly prime and non-simple.
\end{theor}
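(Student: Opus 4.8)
The plan is to combine the structural results about ideals (\cref{corol1} and \cref{coro2}) with the abstract strong-primality criterion (\cref{preparatory}). The goal is to verify that under the hypotheses, $B = B_1 \rtimes_\alpha B_2$ has exactly one non-trivial ideal, namely $J := B_1 \times \{0\}$, and that this ideal is simple and non-trivial as a skew left brace; then strong primality is immediate from \cref{preparatory}, and non-simplicity is immediate because a non-trivial ideal exists.

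First I would invoke \cref{corol1} (respectively \cref{coro2}), whose hypotheses are exactly those assumed here, to conclude that the unique non-trivial ideal of $B$ is $B_1 \times \{0\}$. This handles the uniqueness requirement of \cref{preparatory} directly. Next I must check the two remaining properties of the candidate ideal $J = B_1 \times \{0\}$. Since $B_1$ is simple as a skew left brace, the ideal $J \cong B_1$ is simple as a skew left brace. For non-triviality as a skew left brace I would use the added hypothesis that $B_1$ is not a trivial skew left brace: a skew left brace is trivial precisely when $a * b = 0$ for all $a,b$ (equivalently $+$ and $\circ$ coincide), so $B_1$ non-trivial means $J$ is non-trivial as a skew left brace. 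With uniqueness, simplicity, and non-triviality of $J$ all established, \cref{preparatory} yields that $B$ is strongly prime.

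Finally, non-simplicity of $B$ follows because $J = B_1 \times \{0\}$ is a non-trivial proper ideal: it is proper since $\alpha$ is non-trivial forces $B_2 \neq \{0\}$, and it is non-zero since $B_1$ is non-trivial (indeed $B_1 \neq \{0\}$, as a trivial-group $B_1$ would make $\alpha$ trivial). I would write this out as the short conjunction of the two corollaries' conclusions, being careful to state both the \cref{corol1} case and the \cref{coro2} case in parallel, since the theorem is phrased with a ``resp.''.

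The proof is essentially a bookkeeping assembly of already-proved facts, so I do not expect a genuine obstacle; the only subtlety to get right is confirming that ``$B_1$ is not a trivial skew left brace'' is exactly the hypothesis needed to guarantee $J$ is non-trivial \emph{as a skew left brace} (not merely non-zero as a set), which is the precise input \cref{preparatory} demands.
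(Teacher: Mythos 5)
Your proposal is correct and follows the paper's proof exactly: the paper likewise invokes \cref{corol1} (resp.\ \cref{coro2}) to identify $B_1\times\{0\}\cong B_1$ as the unique non-trivial ideal, notes it is simple and non-trivial as a skew left brace by hypothesis, and concludes by \cref{preparatory}. Your extra bookkeeping (properness of the ideal, and that non-triviality of $B_1$ as a skew left brace --- i.e.\ $*$ not identically zero --- is precisely what \cref{preparatory} demands) is a sound, if more verbose, rendering of the same argument.
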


\begin{proof}
By \cref{corol1} (resp. \cref{coro2}), the unique non-trivial ideal is $B_1\rtimes \{0\}\cong B_1$, which is a simple non-trivial skew left brace by hypothesis. Thus, the statement follows by \cref{preparatory}.
\end{proof}

All the examples obtained by \cref{final} are non-simple and strongly prime. Note that if we take $B_1, B_2$ and $\alpha $ as in \cref{coro2} with $B_1$ and $B_2$ of abelian type, $B_1$ non-trivial, and $B_2$ having a prime number of elements, we find the skew left braces of \cite[Proposition 5.1]{CJO20}.

\section{Examples and counterexamples}

In this section, we collect suitable examples and counterexamples involving semiprime skew left braces. 

\smallskip

By definition, we know the following implications: “strongly prime $\Rightarrow$ prime $\Rightarrow$ semiprime" and “strongly semiprime $\Rightarrow$ semiprime". Apart from some special cases (see for example \cite[Section $5$]{trappeniers2023two}), these notions do not coincide, even when we consider skew left braces of abelian type. In \cite[Example 5.7]{KoSmVe18} and \cite{puljic2021some}, examples that show “semiprime $\nRightarrow$ prime" and “prime $\nRightarrow$ strongly prime" were provided. Note that some skew left braces constructed in \cite{puljic2021some} also show “semiprime $\nRightarrow$ strongly semiprime" and “prime $\nRightarrow$ strongly semiprime". For example, let $B:=\mathcal{B}_{81,804}$. Then, $B$ has only $B*B$ as non-trivial ideal, which is not trivial as skew left brace. Therefore 
$B$ is prime. However, by \cite[Proposition $4.4$]{CeSmVe19}, $B$ is not strongly semiprime.

\noindent Now, we use \cref{final} to show that there exist examples of strongly prime non-simple skew left braces. Since these examples are of abelian type, they respond negatively to \cite[Question 1]{smoktunowicz2024more}.

\begin{prop}\label{esem}
    There exists a strongly prime skew left brace of abelian type that is not a simple skew left brace.
\end{prop}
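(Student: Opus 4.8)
The plan is to realize the desired brace as an instance of \cref{final}, so that both strong primality and non-simplicity are automatic and the only real work is to secure the abelian-type condition together with the hypotheses of \cref{corol1} (or \cref{coro2}). The key structural remark is that, by the very definition of the semidirect product, the additive group of $B_1\rtimes_{\alpha}B_2$ is $(a_1,a_2)+(b_1,b_2)=(a_1+b_1,a_2+b_2)$, i.e.\ the direct product $(B_1,+)\times(B_2,+)$. Hence if both $B_1$ and $B_2$ are of abelian type, so is $B_1\rtimes_{\alpha}B_2$. Thus it suffices to exhibit simple skew left braces $B_1,B_2$ of abelian type, with $B_1$ non-trivial, and a non-trivial homomorphism $\alpha\colon(B_2,\circ)\to\Aut(B_1,+,\circ)$ satisfying the extra condition of one of the two corollaries; \cref{final} then delivers a strongly prime, non-simple brace of abelian type, which is exactly what \cref{esem} asserts.

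Concretely, I would fix $B_1$ to be a simple, non-trivial skew left brace of abelian type whose multiplicative group is $(B_1,\circ)\cong\mathbb{S}_4$, chosen from the classification of \cite{rump2019construction}; such a brace is automatically non-trivial since $\mathbb{S}_4$ is non-abelian. The reason to route the argument through \cref{corol1} rather than \cref{coro2} is precisely the arithmetic of $\mathbb{S}_4$: every automorphism of $\mathbb{S}_4$ is inner, so the outer-automorphism hypothesis of \cref{coro2} is unavailable, whereas $Z(\mathbb{S}_4)$ is trivial, so the centre hypothesis of \cref{corol1} holds. I would then take $B_2$ to be a simple abelian-type brace whose multiplicative group carries a proper non-trivial normal subgroup (for instance another brace with $(B_2,\circ)\cong\mathbb{S}_4$, using $\ker\alpha\cong A_4$ so that $\alpha$ factors through the sign quotient $\mathbb{S}_4/A_4\cong\mathbb{Z}/2\mathbb{Z}$), and let $\alpha$ map this quotient non-trivially into $\Aut(B_1,+,\circ)$. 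With $\alpha$ non-trivial and not injective, \cref{corol1} forces $B_1\times\{0\}$ to be the unique non-trivial ideal, and \cref{final} concludes. (An alternative execution of the same scheme uses \cref{coro2} with $B_2$ of prime order and $\alpha$ landing in outer automorphisms of $(B_1,\circ)$; this reproduces the examples of \cite[Proposition~5.1]{CJO20}.)

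The main obstacle is verification rather than formal deduction: one must genuinely produce a simple non-trivial skew left brace $B_1$ of abelian type with the prescribed multiplicative group and then confirm that $\Aut(B_1,+,\circ)$ is large enough to receive a non-trivial, non-injective image of some simple $B_2$ of abelian type --- equivalently, that a skew-brace automorphism of $B_1$ of the required order actually exists. Locating such $B_1$ and the homomorphism $\alpha$ is where the classification of \cite{rump2019construction}, and if necessary the package \cite{Ve24pack}, does the heavy lifting. Once a concrete triple $(B_1,B_2,\alpha)$ is in hand, the hypotheses of \cref{corol1} are immediate to check, the abelian-type condition is automatic by the additive direct-product remark, and \cref{esem} follows; this also yields the advertised ``computer-free'' example of order $576$ answering \cite[Question~1]{smoktunowicz2024more} in the negative.
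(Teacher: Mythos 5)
Your proposal follows exactly the paper's route: the paper likewise forms $B_1\rtimes_{\alpha} B_2$ with $B_1$ and $B_2$ copies of the simple skew left brace of abelian type of order $24$ whose multiplicative group is $\mathbb{S}_4$, lets $\alpha$ factor through the sign quotient with $\ker(\alpha)\cong \mathbb{A}_4$, and invokes \cref{corol1} and \cref{final}. Your observations that abelian type passes to the semidirect product (the additive group being the direct product of the additive groups) and that \cref{coro2} is unavailable because every automorphism of $\mathbb{S}_4$ is inner, whereas the trivial-centre hypothesis of \cref{corol1} holds, are both correct and match the paper's choices.

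There is, however, one genuine gap, which you flag yourself but never close: you do not establish that $\Aut(B_1,+,\circ)$ actually contains a non-identity automorphism of order $2$ through which $\alpha$ can be defined, and since \cref{esem} is an existence statement, this verification is the heart of the proof rather than a routine check. Note that it does not suffice to take an arbitrary inner automorphism $i_b$ of $(B_1,\circ)$: such a map preserves $\circ$ but in general not $+$, so membership in $\Aut(B_1,+,\circ)$ is a real constraint. The paper closes this step concretely. It starts from the simple abelian-type brace of order $24$ of \cite[Section 7.1]{bachiller2015extensions} (so the existence of $B_1$ is itself cited, not assumed), shows that the Sylow subgroups $A_2,A_3$ of $(B_1,+)$ are left ideals that cannot be normal in $(B_1,\circ)$ --- otherwise they would be ideals, contradicting simplicity --- and deduces $(B_1,\circ)\cong \mathbb{S}_4$; then, by \cite[Examples 3 and 4]{rump2019construction}, there is a unique element $b$ of multiplicative order $2$ with $i_b(A_2)=A_2$ and $i_b(A_3)=A_3$, and $i_b$ induces brace automorphisms on $A_2$ and $A_3$, from which a direct calculation shows $i_b\in\Aut(B_1,+,\circ)$ of order $2$. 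Deferring this to ``the classification or the GAP package will do the heavy lifting'' leaves the witness unproduced; your outline is sound, but to constitute a proof it must be supplemented by precisely this construction of $b$ (or an equivalent explicit verification).
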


\begin{proof}
By \cite[Section $7.1$]{bachiller2015extensions}, there exists a simple skew left brace $B_1$ of abelian type having $24$ elements. Let $A_2$ and $A_3$ be the Sylow subgroups of $(B_1,+)$. By \cite[Section $3$]{rump2019construction}, we know that $A_2$ and $A_3$ are left ideals of $(B_1,\circ)$. If $A_2$ is normal in $(B_1,\circ)$, by \cite[Lemma $1.8$ and $1.9$]{CeSmVe19} we have that it is an ideal of $B_1$, which is a contradiction. In the same way, one can show that $A_3$ cannot be normal in $(B_2,\circ)$. By a standard exercise, we have that the unique group of order $24$ without normal Sylow subgroups is $\mathbb{S}_4$, therefore $(B_1,\circ)\cong \mathbb{S}_4$. Now, by \cite[Example $4$]{rump2019construction} there exists a unique element $b\in B_1$, which has multiplicative order $2$, such that $i_b(A_2)=A_2$ and $i_b(A_3)=A_3$. By \cite[Examples $3$ and $4$]{rump2019construction}, we obtain the following statements: $b$ does not act trivially on $A_3$ by the map $\lambda_b$; the map $i_b$ induces two skew left braces automorphisms on $A_2$ and $A_3$, regarded as skew left braces. Using these facts, by a standard calculation, we obtain that $i_b$ is an automorphism of $(B_1,+,\circ)$ of order $2$.\\
Now, let $B_2$ be an isomorphic copy of $B_1$. If $\alpha$ is the non-trivial homomorphism from $(B_2,\circ) $ to $Aut(B_1,+,\circ) $ that sends a transposition of $(B_2,\circ)$ to $i_b$ and with $Ker(\alpha)\cong \mathbb{A}_4$, by \cref{corol1} and \cref{final} we obtain that $B_1\rtimes_{\alpha} B_2$ is a non-simple strongly prime skew left brace.
\end{proof}

\noindent We highlight that, by \cref{preparatory}, the example given in \cite[Example 5.3]{CJO20} is also a non-simple strongly prime skew left braces of abelian type, and it was obtained without computer calculations. However, it is a skew left brace with $92.160$ elements, while our example has $576$ elements.\\
Anyway, using the GAP package \cite{Ve24pack}, one can obtain further examples of skew left braces which can be used in \cref{corol1} and \cref{final} to construct further examples of non-simple strongly prime skew left braces of abelian type. For example, we can take $B_1$ and $B_2$ in $\{\mathcal{B}_{24,94},\mathcal{B}_{72,475}\}$.
Another attempt is the inspection of the skew left braces of abelian type having small size by computer \cite{Ve24pack}. In this regard, the following result shows that there are strongly prime skew left braces that are different from the ones considered in this paper.

\begin{prop}
        There exists a strongly prime skew left brace of abelian type that is not simple, and is different from the ones provided up to now. In particular, it is not a semidirect product of skew left braces, and is the minimal example of a strongly prime non-simple skew left brace of abelian type.
\end{prop}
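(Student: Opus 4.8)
The plan is to exhibit an explicit brace from the database $\SmallBrace$ and to certify all four assertions — strongly prime, non-simple, not a semidirect product, and minimal — by combining the sufficient criterion of \cref{preparatory} with a finite computer search. The guiding observation is that \cref{preparatory} reduces ``strongly prime and non-simple'' to a purely lattice-theoretic condition: it suffices to locate a skew left brace $B$ of abelian type whose ideal lattice consists of $\{0\}$, $B$, and a single intermediate ideal $J$, with $J$ simple and non-trivial as a skew left brace. Any such $B$ is automatically non-simple and, by \cref{preparatory}, strongly prime.

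First I would run through $\SmallBrace$ in order of increasing size, computing for each entry its ideal lattice and retaining those with exactly one non-trivial ideal $J$; among these I would keep the ones for which $J$ is simple and non-trivial. Let $B=\mathcal{B}_{n,k}$ be the smallest brace surviving this filter. By the observation above, $B$ is non-simple and strongly prime, which settles existence.

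To see that $B$ is genuinely new — in particular not a semidirect product — I would exploit that $B$ has a unique non-trivial ideal $J$. If $B$ decomposed nontrivially as $B_1\rtimes_{\alpha}B_2$, then $B_1\times\{0\}$ would be a non-trivial ideal of $B$, forcing $B_1\times\{0\}=J$; moreover the additive operation of a semidirect product is the direct product, so $(B,+)$ would split as $(B,+)=(J,+)\oplus(H,+)$ for a sub-skew-brace $H\cong B/J$ complementing $J$. I would therefore enumerate the sub-skew-braces of $B$ of order $|B|/|J|$ and verify that none is an additive complement of $J$; the absence of such a complement shows that $B$ is not a semidirect product, and in particular distinguishes it from the examples produced by \cref{final} and from the example of \cite[Example 5.3]{CJO20}, all of which are semidirect products.

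Finally, for minimality I would check by exhaustive search that every skew left brace of abelian type of size strictly less than $n$ fails to be simultaneously non-simple and strongly prime. Here the clean criterion of \cref{preparatory} no longer suffices, because a brace could in principle be strongly prime without possessing a unique simple non-trivial ideal; so for each smaller non-simple candidate I would test strong primality directly, by computing the iterated $*$-products of its non-zero ideals. Since the ideal lattice is finite, the descending chain of such $*$-products stabilizes after finitely many steps, so strong primality is decided by a terminating computation. The main obstacle is precisely this minimality verification: it is the one part that cannot be funnelled through the sufficient condition of \cref{preparatory} and instead requires genuinely deciding (non-)strong-primality across the whole initial segment of the database, together with the bookkeeping needed to certify, in the non-splitting step, that $J$ admits no complementary sub-skew-brace.
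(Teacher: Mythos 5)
Your proposal is correct and follows essentially the same route as the paper: the paper takes $B:=\mathcal{B}_{48,1532}$, exactly the brace your database filter would land on --- it has a unique non-trivial ideal $I$ of size $24$, isomorphic as a skew left brace to the simple brace $\mathcal{B}_{24,94}$ --- applies \cref{preparatory} for strong primality, excludes a semidirect product decomposition by a finite computation, and certifies minimality by exhaustive search with \cite{Ve24pack} (accelerated via \cite[Proposition $4.4$]{CeSmVe19}, where you instead propose deciding strong primality by brute-force closure of iterated $*$-products; both terminate, though your termination argument should invoke the finiteness of the set of additive subgroups rather than a ``descending chain'' in the ideal lattice, since $X*Y$ need not be an ideal, as the paper's remark after \cref{semipri} notes). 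The one genuinely different step is the non-splitting argument. The paper argues externally: any decomposition must have the form $I\rtimes_{\alpha}B_2$ with $B_2$ the trivial brace of size $2$, and enumerating the possibilities (the unique one with $Ker(\alpha)\neq B_2$, together with the direct product) shows each has an ideal of size $2$, which $B$ lacks. You argue internally: a decomposition would force the existence of a sub-skew-brace of order $|B|/|J|$ that is an additive complement of $J$, and you check that none exists. Your necessary condition is correctly derived (in $B_1\rtimes_{\alpha}B_2$ the subset $\{0\}\times B_2$ is a sub-skew-brace complementing $B_1\times\{0\}$, and uniqueness of the non-trivial ideal pins down the image of $B_1\times\{0\}$ under any isomorphism), but be aware it is one-sided: if the computation did turn up a complement, the test would be inconclusive, since an additive-complement sub-skew-brace $H$ does not by itself produce a semidirect product --- one additionally needs, e.g., $\lambda_j(h)=h$ for all $j\in J$, $h\in H$, and conjugation by $H$ inducing skew left brace automorphisms of $J$ --- and you would then have to fall back on this fuller internal characterization or on the paper's external enumeration, which is conclusive regardless of outcome.
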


\begin{proof}
Let $B$ be the skew left brace given by $B:=\mathcal{B}_{48,1532}$. Then, $B$ has only one non-trivial ideal $I$ of size $24$, which is isomorphic as a skew left brace to $\mathcal{B}_{24,94}$. By \cref{preparatory}, $B$ is a strongly prime skew left brace. Now, if $B$ is a semidirect product of skew left braces, it must be constructed as a semidirect product of the form $I\rtimes_{\alpha} B_2$, where $B_2$ is the trivial skew left brace of size $2$. By computer calculations \cite{Ve24pack}, we obtain that there is a unique semidirect product of the form $I\rtimes_{\alpha} B_2$ with $Ker(\alpha)\neq B_2$, and it has an ideal of size $2$. Therefore, such a semidirect product cannot be isomorphic to $B$. Again by computer \cite{Ve24pack}, combined with \cite[Proposition $4.4$]{CeSmVe19} to optimize calculations, we find that $B$ is the minimal example of a strongly prime non-simple skew left brace of abelian type.
\end{proof}

\smallskip

The notion of strongly prime (resp. semiprime) skew left brace was originally given for skew left braces of abelian type. This is the reason why, until now, we focused on this case. However, if we consider skew left braces with an arbitrary additive group, we can find a smaller example of strongly prime non-simple skew left brace. Indeed, let $B$ be the skew left brace given in \cref{contro1}. Then, it is a skew left brace of size $24$ with a unique ideal $I$ of size $12$, which is simple and non-trivial as a skew left brace. By \cref{preparatory} $B$ is strongly prime. 

\bibliographystyle{elsart-num-sort}
\bibliography{Bibliography2}

\begin{thebibliography}{10}
\expandafter\ifx\csname url\endcsname\relax
  \def\url#1{\texttt{#1}}\fi
\expandafter\ifx\csname urlprefix\endcsname\relax\def\urlprefix{URL }\fi

\bibitem{bachiller2015extensions}
D.~Bachiller, Extensions, matched products, and simple braces, J. Pure Appl. Algebra 222~(7) (2018) 1670--1691.
\newline\urlprefix\url{https://doi.org/10.1016/j.jpaa.2017.07.017}

\bibitem{BCJO19}
D.~Bachiller, F.~Ced\'{o}, E.~Jespers, J.~Okni\'{n}ski, Asymmetric product of left braces and simplicity; new solutions of the {Y}ang-{B}axter equation, Commun. Contemp. Math. 21~(8) (2019) 1850042, 30.
\newline\urlprefix\url{https://doi.org/10.1142/S0219199718500426}

\bibitem{ballester2022}
A.~Ballester-Bolinches, R.~Esteban~Romero, Triply factorised groups and the structure of skew left braces, Commun. Math. Stat. 10~(2) (2022) 353--370.
\newline\urlprefix\url{https://doi.org/10.1007/s40304-021-00239-6}

\bibitem{baxter1972partition}
R.~J. Baxter, Partition function of the eight-vertex lattice model, Ann. Phys. 70~(1) (1972) 193--228.
\newline\urlprefix\url{https://doi.org/10.1016/0003-4916(72)90335-1}

\bibitem{CaCaSt20x}
M.~Castelli, F.~Catino, P.~Stefanelli, Indecomposable involutive set-theoretic solutions of the {Y}ang-{B}axter equation and orthogonal dynamical extensions of cycle sets, Mediterr. J. Math. 18~(6) (2021) Paper No. 246, 27.
\newline\urlprefix\url{https://doi.org/10.1007/s00009-021-01912-4}

\bibitem{cacs1}
F.~Catino, I.~Colazzo, P.~Stefanelli, On regular subgroups of the affine group, Bull. Aust. Math. Soc. 91~(1) (2015) 76--85.
\newline\urlprefix\url{http://dx.doi.org/10.1017/S000497271400077X}

\bibitem{CJO20}
F.~Ced\'{o}, E.~Jespers, J.~Okni\'{n}ski, An abundance of simple left braces with abelian multiplicative {S}ylow subgroups, Rev. Mat. Iberoam. 36~(5) (2020) 1309--1332.
\newline\urlprefix\url{https://doi.org/10.4171/rmi/1168}

\bibitem{CeSmVe19}
F.~Ced\'{o}, A.~Smoktunowicz, L.~Vendramin, Skew left braces of nilpotent type, Proc. Lond. Math. Soc. (3) 118~(6) (2019) 1367--1392.
\newline\urlprefix\url{https://doi.org/10.1112/plms.12209}

\bibitem{CHILDS2018270}
L.~N. Childs, Skew braces and the {Galois correspondence for Hopf Galois structures}, J. Algebra 511 (2018) 270--291.
\newline\urlprefix\url{https://www.sciencedirect.com/science/article/pii/S0021869318303892}

\bibitem{dietzel2024indecomposable}
C.~Dietzel, S.~Properzi, S.~Trappeniers, Indecomposable set-theoretical solutions to the {Yang-Baxter} equation of size $ p^2$, Comm. Algebra in press.
\newline\urlprefix\url{https://doi.org/10.1080/00927872.2024.2405024}

\bibitem{facchini2024}
A.~Facchini, M.~Pompili, {Semidirect products of digroups and skew braces}, Bull. Belg.Math. Soc. - Simon Stevin 31~(1) (2024) 40 -- 53.
\newline\urlprefix\url{https://doi.org/10.36045/j.bbms.230825}

\bibitem{guarnieri2017skew}
L.~Guarnieri, L.~Vendramin, Skew braces and the {Y}ang-{B}axter equation, Math. Comp. 86~(307) (2017) 2519--2534.
\newline\urlprefix\url{https://doi.org/10.1090/mcom/3161}

\bibitem{jespers2021radical}
E.~Jespers, L.~Kubat, A.~{Van Antwerpen}, L.~Vendramin, Radical and weight of skew braces and their applications to structure groups of solutions of the {Yang–Baxter} equation, Adv. Math. 385 (2021) 107767.
\newline\urlprefix\url{https://doi.org/10.1016/j.aim.2021.107767}

\bibitem{kinnear2021wreath}
P.~I. Kinnear, The wreath product of semiprime skew braces is semiprime, Comm. Algebra 49~(2) (2021) 533--537.
\newline\urlprefix\url{https://doi.org/10.1080/00927872.2020.1805457}

\bibitem{kinnear2019}
P.~I. Kinnear, I.~Lau, D.~Puljic, Left braces and solutions of the {Yang-Baxter} equation, Year 4 Project, School of Matematics, University of Edinburgh.
\newline\urlprefix\url{https://www.sfu.ca/~iplau/Edinburgh_Math_Project.pdf}

\bibitem{KoSmVe18}
A.~Konovalov, A.~Smoktunowicz, L.~Vendramin, On skew braces and their ideals, Exp. Math. 0~(0) (2018) 1--10.
\newline\urlprefix\url{https://doi.org/10.1080/10586458.2018.1492476}

\bibitem{properzi2023common}
S.~Properzi, A.~Van~Antwerpen, A common divisor graph for skew braces, arXiv preprint arXiv:2306.12415.

\bibitem{puljic2021some}
D.~Pulji{\'c}, A.~Smoktunowicz, K.~N. Zenouz, Some braces of cardinality and related {Hopf-Galois} extensions, New York J. Math 28 (2022) 494--522.
\newline\urlprefix\url{http://nyjm.albany.edu/j/2022/28-19.html}

\bibitem{robinson2012course}
D.~J. Robinson, A Course in the Theory of Groups, vol.~80, Springer Science \& Business Media, 2012.
\newline\urlprefix\url{http://doi.org/10.1007/978-1-4419-8594-1}

\bibitem{rump2007braces}
W.~Rump, Braces, radical rings, and the quantum {Y}ang-{B}axter equation, J. Algebra 307~(1) (2007) 153--170.
\newline\urlprefix\url{https://doi.org/10.1016/j.jalgebra.2006.03.040}

\bibitem{Ru08}
W.~Rump, Semidirect products in algebraic logic and solutions of the quantum {Y}ang-{B}axter equation, J. Algebra Appl. 7~(4) (2008) 471--490.
\newline\urlprefix\url{https://doi.org/10.1142/S0219498808002904}

\bibitem{rump2019construction}
W.~Rump, Construction of finite braces, Ann. Comb. 23~(2) (2019) 391--416.
\newline\urlprefix\url{https://doi.org/10.1007/s00026-019-00430-1}

\bibitem{smoktunowicz2024more}
A.~Smoktunowicz, More on skew braces and their ideals, in: Amitsur Centennial Symposium, vol. 800, American Mathematical Society, Bar-Ilan University, 2024, p. 301.
\newline\urlprefix\url{http://dx.doi.org/10.1090/conm/800/16063}

\bibitem{SmVe18}
A.~Smoktunowicz, L.~Vendramin, On skew braces (with an appendix by {N}. {B}yott and {L}. {V}endramin), J. Comb. Algebra 2~(1) (2018) 47--86.
\newline\urlprefix\url{https://doi.org/10.4171/JCA/2-1-3}

\bibitem{trappeniers2023two}
S.~Trappeniers, On two-sided skew braces, J. Algebra 631 (2023) 267--286.
\newline\urlprefix\url{https://doi.org/10.1016/j.jalgebra.2023.05.003}

\bibitem{Ve24pack}
L.~Vendramin, A.~Konovalov, {C}ombinatorial {S}olutions for the {Y}ang-{B}axter equation, {V}ersion 0.10.6 ({G}{A}{P} {p}ackage {Y}ang{B}axter) (2024).
\newline\urlprefix\url{https://gap-packages.github.io/YangBaxter}

\bibitem{yang1967}
C.~N. Yang, {Some Exact Results for the Many-Body Problem in one Dimension with Repulsive Delta-Function Interaction}, Phys. Rev. Lett. 19 (1967) 1312--1315.
\newline\urlprefix\url{https://link.aps.org/doi/10.1103/PhysRevLett.19.1312}

\end{thebibliography}

\end{document}